\tikzstyle{line} = [draw, thick, -latex']
\tikzstyle{bigArrow} = [thick, decoration={markings,mark=at position
\tikzstyle{innerWhite} = [semithick, white,line width=1.4pt, shorten >= 4.5pt]
\definecolor{mycolor}{RGB}{0,160,0}
\def\real{\hbox{\rm\setbox1=\hbox{I}\copy1\kern-.45\wd1 R}}
\def\natural{\hbox{\rm\setbox1=\hbox{I}\copy1\kern-.45\wd1 N}}
\newcommand{\be}{\begin{equation}}
\newcommand{\en}{\end{equation}}
\theoremstyle{definition}
\newtheorem*{ack*}{Acknowledgment}
\newtheorem{thm}{Theorem}
\newtheorem{prop}[subsection]{Proposition}
\newtheorem{lem}[subsection]{Lemma}
\newtheorem{cor}[subsection]{Corollary}
\theoremstyle{remark}
\numberwithin{equation}{section}
\newcommand{\B}{\mathcal{B}}
\title[Slow Entropies for Generic Transformations]
{Genericity and Rigidity for Slow Entropy Transformations}
\author{Terry Adams}
\address{In memory of mathematician and artist Nat Friedman.}
\email{terry@ieee.org}
\date{}
\begin{document}

\begin{abstract}
The notion of slow entropy, both upper and lower slow entropy, was defined 
by Katok and Thouvenot as a more refined measure of complexity 
for dynamical systems, than the classical Kolmogorov-Sinai entropy.  
For any subexponential rate function $a_n(t)$, we prove there exists a generic class 
of invertible measure preserving systems such that the lower slow entropy is zero and 
the upper slow entropy is infinite. 
Also, given any subexponential rate $a_n(t)$, 
we show there exists a rigid, weak mixing, invertible system such that 
the lower slow entropy is infinite with respect to $a_n(t)$. 
This gives a general solution to a question on the existence 
of rigid transformations with positive polynomial upper slow entropy,  
Finally, we connect slow entropy with the notion of entropy covergence rate 
presented by Blume.  In particular, we show slow entropy is a strictly 
stronger notion of complexity and give examples which have zero upper slow entropy, 
but also have an arbitrary sublinear positive entropy convergence rate.
\end{abstract}


\maketitle


\section{Introduction}
The notion of slow entropy was introduced by Katok and Thouvenot in \cite{katok1997slow} 
for amenable discrete group actions.  It generalizes the classical notion 
of Kolmogorov-Sinai entropy \cite{Kolmogorov, Sinai2} 
for Z-actions and gives a method for distinguishing the 
complexity of transformations with zero Kolmogorov-Sinai entropy\footnote{The Kolmogorov-Sinai entropy 
of a transformation $T$ is referred to as the entropy of $T$.}.  
The recent survey \cite{KKW2020} gives a general account of several extensions of entropy, 
including a comprehensive background on slow entropy. 
Slow entropy has been computed for several examples including compact group rotations, 
Chacon-3 \cite{Fr70}, the Thue-Morse system and the Rudin-Shapiro system.  
In \cite{ferenczi1997measure}, it is shown that the lower slow entropy of any rank-one transformation 
is less than or equal to 2.  Also, in \cite{ferenczi1996rank},  
it is shown there exist rank-one transformations 
with infinite upper slow entropy with respect to any polynomial. 
In \cite{kanigowski2018slow}, Kanigowski is able to get more precise upper bounds on slow entropy 
of local rank-one flows.  Also, in \cite{kanigowski2019slow}, the authors obtain polynomial 
slow entropies for unipotent flows. 

In \cite{KKW2020}, the following question is given:

\noindent
Question 6.1.2. Is it possible to have the upper slow entropy for a rigid transformation 
positive with respect to $a_n(t) =n^t$?

\noindent 
We give a positive answer to this question.  Given any subexponential rate, 
we show that a generic transformation has infinite upper slow entropy with respect to that rate.  
We say $a_n(t) > 0$, for $n \in \natural$ and $t > 0$, is subexponential, if given $\beta > 1$ 
and $t > 0$, $\lim_{n\to \infty} \frac{a_n(t)}{\beta^n} = 0$.  
We will only consider monotone $a_n(t)$ such that $a_n(t) \geq a_n(s)$ for $t > s$. 
Let $(X, \B, \mu )$ be a standard probability space (i.e., isomorphic 
to $[0,1]$ with Lebesgue measure).  Also, let 
\[
\mathcal{M} = \{ T:X\to X\ |\ T\ \mbox{is invertible and preserves $\mu$} \} . 
\]
We consider the weak topology on the space $\mathcal{M}$ which is induced 
by the strong operator topology on the space of Koopman operators, 
$\{ U_T : T \in \mathcal{M} \}$.  
One of our three main results is the following. 
\begin{thm}
Let $a_n(t)$ be any subexponential rate function. 
There exists a dense $G_{\delta}$ subset $G \subset \mathcal{M}$ such that for each 
$T \in G$, the upper slow entropy of $T$ is infinite with respect to $a_n(t)$. 
\end{thm}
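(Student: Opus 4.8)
The plan is to run a Baire category argument on the Polish space $\mathcal{M}$ equipped with the weak topology. A crucial structural observation is that every $T \in \mathcal{M}$ acts on the \emph{same} fixed space $(X,\B,\mu)$, so I may fix, once and for all, a refining generating sequence of finite partitions $P_1 \subset P_2 \subset \cdots$ of $X$ (say dyadic partitions under the identification $X \cong [0,1]$). By the Katok--Thouvenot formalism the upper slow entropy can be computed along such a generating sequence, so it suffices to exhibit high complexity with respect to these fixed partitions. For a fixed scale $n$, tolerance $\epsilon$, and partition $P$, let $N(n,\epsilon,P,T)$ denote the maximal number of points whose $P$-names of length $n$ are pairwise $\epsilon$-separated in the normalized Hamming metric $\bar{d}_n$ while carrying at least $1-\epsilon$ of the measure. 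For each $t \in \natural$ I will choose in advance a single partition $Q_t := P_{k(t)}$ and a single tolerance $\epsilon_t := 1/m(t)$ (fine enough that the construction below can beat the rate $t$), and define, for $t, N_0 \in \natural$,
\[
U(t,N_0) = \bigl\{ T \in \mathcal{M} : \exists\, n \geq N_0,\ N(n,\epsilon_t,Q_t,T) > a_n(t) \bigr\}.
\]
I then set $G = \bigcap_{t,N_0} U(t,N_0)$. For $T \in G$ and fixed $t$, every $N_0$ supplies some $n \geq N_0$ with $N(n,\epsilon_t,Q_t,T) > a_n(t)$; since $(Q_t,\epsilon_t)$ is fixed (depending only on $t$), this yields $\limsup_n N(n,\epsilon_t,Q_t,T)/a_n(t) \geq 1 > 0$, hence upper slow entropy at least $t$ for every $t$, i.e. infinite. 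Thus it remains to prove each $U(t,N_0)$ is open and dense.

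Openness reduces to lower semicontinuity of $T \mapsto N(n,\epsilon,Q_t,T)$ at each fixed $n$. Here the weak topology is well-behaved: for fixed $n$, the $\bar{d}_n$-geometry of $Q_t$-names is determined by the finitely many atom measures $\mu\bigl(\bigcap_{i=0}^{n-1} T^{-i}(Q_t)_{w_i}\bigr)$, and each of these depends continuously on $T$ because $U_{T_j} \to U_T$ strongly forces $T_j^{-i} A \to T^{-i} A$ in $\mu$ for all $i < n$. A maximal $\epsilon$-separated family of names with total weight $> 1-\epsilon$ therefore survives sufficiently small weak perturbations, so $\{T : N(n,\epsilon,Q_t,T) > a_n(t)\}$ is open; taking the union over $n \geq N_0$ keeps it open.

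Density is the heart of the matter. Given $T_0 \in \mathcal{M}$ and a basic weak neighborhood $V$ determined by a finite partition $\mathcal{Q}$ and $\delta > 0$, I must produce $T \in V$ realizing $N(n,\epsilon_t,Q_t,T) > a_n(t)$ at some $n \geq N_0$. Approximating $T_0$ by an aperiodic transformation (these are dense) and invoking Rokhlin's lemma, I obtain, for any prescribed large height $H$, a tower filling all but $\eta$ of $X$. I choose $n \geq N_0$ with $H \gg n$ and then redefine the dynamics on the tower, together with a refinement of its base, so that the induced $Q_t$-codings of length $n$ realize a prescribed combinatorial pattern. Because $V$ constrains $T$ only through its coarse action on $\mathcal{Q}$ up to $\delta$, I have complete freedom to prescribe the long-range name structure on the tall tower while matching $T_0$ on $\mathcal{Q}$ to within $\delta$; this keeps $T \in V$. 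The subexponential hypothesis is exactly what makes the target attainable: with an alphabet of $m = |Q_t|$ symbols there are $m^n$ possible names, and a counting estimate shows that a ball of Hamming radius $\epsilon_t n$ contains at most $\binom{n}{\epsilon_t n} m^{\epsilon_t n}$ names, which is exponentially smaller than $m^n$. Hence a positive fraction of all names forms an $\epsilon_t$-separated family of cardinality $\geq c^n$ for some $c > 1$; since $\lim_{n\to\infty} a_n(t)/c^n = 0$ by subexponentiality, for $n$ large this family already exceeds $a_n(t)$, giving $N(n,\epsilon_t,Q_t,T) > a_n(t)$.

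The main obstacle is precisely this density step: engineering, inside an arbitrary weak neighborhood, a transformation whose $Q_t$-names at a single large scale $n$ are not merely distinct but genuinely $\bar{d}$-separated and of controlled total measure. The delicate point is passing from "many distinct names" to "many Hamming-separated names carrying $\geq 1-\epsilon_t$ of the measure," which the exponential-gap counting bound above resolves, and simultaneously respecting the weak constraint, which the tower construction accommodates since $n \ll H$ and $V$ is coarse. Once density and openness are established, each $U(t,N_0)$ is open and dense, so $G = \bigcap_{t,N_0} U(t,N_0)$ is a dense $G_\delta$ by the Baire category theorem, completing the proof.
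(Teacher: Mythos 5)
Your Baire--category skeleton (open sets $U(t,N_0)$ indexed by the rate parameter and a scale cutoff, intersected over both) matches the paper's, and your openness argument is essentially the one in Proposition \ref{open-infinity-prop}, modulo one point you gloss over: with a fixed tolerance $\epsilon_t$ and the closed constraint ``carrying at least $1-\epsilon_t$ of the measure,'' the condition $N(n,\epsilon_t,Q_t,T) > a_n(t)$ need not survive a weak perturbation, since the total mass of the witnessing cylinders can drop just below $1-\epsilon_t$. The paper builds in the needed wiggle room by quantifying $\exists\,\delta > 1/M$ in the definition of $\overline{G}(M,N,t,P)$ and interpolating $\delta_1$ strictly between $1/M$ and the witnessed $\delta_0$. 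That is fixable.

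The genuine gap is in the density step. You propose to take a Rokhlin tower of height $H \gg n$ and ``redefine the dynamics on the tower\ldots so that the induced $Q_t$-codings of length $n$ realize a prescribed combinatorial pattern,'' claiming ``complete freedom to prescribe the long-range name structure'' while staying in the weak neighborhood $V$. These two requirements are in direct tension. To stay in $V$ you may modify $T_0$ only on a set of small measure (e.g.\ the top level of the tower plus a residual set); but then the length-$n$ names of all points in the bottom $H-n$ levels are completely determined by $T_0$ and are unchanged, so you have prescribed nothing. Conversely, prescribing the $n$-names throughout a tower with $n \ll H$ forces you to alter $T$ on a set of measure near $1$, which exits $V$. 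The paper's Proposition \ref{dense-infinity-prop} resolves this with a three-scale construction: a rank-one column of height $h$ whose levels approximate the partition, an intermediate name length $n \approx 2^r k(h+1) \gg h$, and a final column height $2^s k(h+1) \gg n$; the map is modified only above the height-$h$ columns (small measure), yet this controls the $n$-names because over $n \gg h$ steps an orbit wraps around the short columns many times and the wrap order is exactly what the modification encodes. Two further ingredients are missing from your sketch. First, one must manufacture \emph{two} building blocks of length $k(h+1)$ whose $P$-names sit at a definite Hamming distance $d$ bounded away from $0$ (the spacer device of Section \ref{two-ind-section} combined with the ergodic averaging estimate for $\frac{1}{k}\sum_{i=0}^{k-1} I_p(T_0^i x)$); without such blocks, concatenations need not be Hamming-separated at all. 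Second, your count that ``a positive fraction of all $m^n$ names forms an $\epsilon_t$-separated family of size $\geq c^n$'' describes what is combinatorially available in the full shift, not what is dynamically realizable by a single measure-preserving map near $T_0$; the paper instead lower-bounds, via Lemmas \ref{ham1} and \ref{ham2}, the covering number of the set of names actually produced by independent cutting and stacking, namely $2^{m(1-\mathcal{H}(2\varepsilon/d+1/m))}$ with $m = n/\ell$ blocks, which is still exponential in $n$ and hence beats the subexponential $a_n(t)$. As written, your density argument does not go through.
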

Thus, the generic transformation answers question 6.1.2 in the affirmative, 
since the generic transformation is known to be weak mixing and rigid.  
Our proof is constructive and provides a recipe for constructing 
rigid rank-ones with infinite upper slow entropy. 

We show that there is a generic class of transformations such that 
the lower slow entropy is zero with respect to a given divergent rate. 
\begin{thm}
Suppose $a_n(t) \in \real$ is a rate such that for $t >0$, 
$\lim_{n\to \infty} a_n(t) = \infty$.  
There exists a dense $G_{\delta}$ subset $G \subset \mathcal{M}$ such that for each 
$T \in G$, the lower slow entropy of $T$ is zero with respect to $a_n(t)$. 
\end{thm}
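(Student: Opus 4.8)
The plan is to obtain the conclusion from the Baire category theorem by exhibiting the good transformations as a dense $G_\delta$. First I would fix, once and for all, a refining sequence of finite partitions $P_1 \subset P_2 \subset \cdots$ with $\bigvee_m P_m = \B$. Since lower slow entropy is monotone under refinement and is realized as the supremum over such a generating sequence, it suffices to control, for each fixed $m$, the Hamming covering numbers $S(n,\epsilon,P_m)$ (the least number of $\epsilon$-balls in the normalized Hamming metric on length-$n$ $P_m$-names whose union has measure at least $1-\epsilon$). Unwinding the definition, $T$ has lower slow entropy zero with respect to $a_n(t)$ if and only if for every $m$, every rational $\epsilon>0$ and every rational $t>0$ one has $\liminf_{n\to\infty} S(n,\epsilon,P_m)/a_n(t)=0$; equivalently, for every rational $\delta>0$ there are infinitely many $n$ with $S(n,\epsilon,P_m)<\delta\,a_n(t)$. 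I would therefore take $G$ to be the intersection, over all these countably many data and over all $N\in\natural$, of the sets
\[
O(m,\epsilon,t,\delta,N) = \{\, T \in \mathcal{M} : S(n,\epsilon,P_m) < \delta\, a_n(t)\ \text{for some}\ n > N \,\}.
\]

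The main technical point is to show each $O(m,\epsilon,t,\delta,N)$ is open, so that $G$ is $G_\delta$. For a fixed finite $n$ and a fixed partition $P_m$ of cardinality $k$, the law of the length-$n$ name $w_T(x)=(P_m(x),P_m(Tx),\dots,P_m(T^{n-1}x))\in\{1,\dots,k\}^n$ is continuous in $T$ for the weak topology: each coordinate $x\mapsto P_m(T^i x)$ moves only slightly in measure under a weak perturbation, and there are finitely many coordinates and finitely many words, so $\sum_w |\mu(w_T=w)-\mu(w_{T_0}=w)|$ is small whenever $T$ is weakly close to $T_0$. A covering by fewer than $\delta\,a_n(t)$ Hamming balls of radius $\epsilon$ amounts to a choice of finitely many center words whose balls capture measure exceeding $1-\epsilon$; since the captured measure is a continuous function of the name law, the same centers continue to work throughout a weak neighborhood of any $T_0\in O(m,\epsilon,t,\delta,N)$, after passing to strict inequalities and a slight enlargement of the rational scale $\epsilon$ to convert the defining inequality $\ge 1-\epsilon$ into $>1-\epsilon$. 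Making these slack estimates precise is the crux, but it is routine given the continuity of finite-time name distributions.

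Finally I would prove density of $G$ by showing it contains every periodic transformation and invoking the classical fact that periodic transformations are weakly dense in $\mathcal{M}$. If $T^p=\mathrm{Id}$, then for a.e. $x$ the name $w_T(x)$ is periodic of period $p$, so its entire length-$n$ string is determined by its first $p$ entries; hence for every $n$ the number of distinct length-$n$ $P_m$-names is at most $k^{p}$, where $k=|P_m|$, giving $S(n,\epsilon,P_m)\le k^{p}$ for all $n$, a bound independent of $n$. Because $a_n(t)\to\infty$ for each $t>0$ by hypothesis, $\lim_{n\to\infty}S(n,\epsilon,P_m)/a_n(t)=0$, so the $\liminf$ vanishes for every $m,\epsilon,t$ and $T\in G$. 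Since $G$ contains the dense set of periodic transformations and is $G_\delta$, it is a dense $G_\delta$, and by Baire every $T\in G$ has lower slow entropy zero with respect to $a_n(t)$, which proves the theorem.
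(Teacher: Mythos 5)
Your skeleton---a countable intersection of sets indexed by partition, scale, rate parameter and time cutoff, with openness coming from weak-continuity of finite-time name distributions and density from a dense class of ``low-complexity'' transformations---is the paper's skeleton, and your openness sketch is in substance the argument of Proposition \ref{open-zero-prop}. Where you genuinely diverge is the density step. The paper (Proposition \ref{dense-zero-prop}) perturbs a rank-one transformation: it cuts a tall column into $k_1k_2$ pieces and restacks so that, at a time $n$ commensurate with the column height $h$, almost every length-$n$ name is, up to a small Hamming error controlled by Lemma \ref{ham5}, one of at most $h$ shifts of a single column name, forcing $S_P(T,n,\delta,\delta)\leq h < a_n(1/t)/N$. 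You instead observe that a transformation with $T^p=\mathrm{Id}$ has at most $k^p$ distinct length-$n$ $P_m$-names for every $n$, hence uniformly bounded covering numbers, and invoke the classical weak density of periodic transformations. This is more elementary, avoids cutting and stacking entirely, and even shows directly that $G$ contains a dense set rather than arguing density of each open layer. The trade-off is that the paper's construction stays inside the rank-one world and doubles as a recipe for explicit rigid rank-one examples (feeding the corollary that follows), whereas your periodic witnesses are non-ergodic---harmless for the theorem as stated, since $G$ may be intersected with the generic ergodic class, but less informative.

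Two points need tightening. First, with $\epsilon$ fixed in $O(m,\epsilon,t,\delta,N)$, openness is not automatic: a weak perturbation can push the covered mass from $\geq 1-\epsilon$ to just below it. The paper builds in slack by quantifying existentially over the scale inside an open interval ($\exists\, 0<\delta<\frac{1}{M}$ in the definition of $\underline{G}(N,t,M,P)$), so a witness at scale $\delta_0$ relaxes to a witness at some $\delta_1\in(\delta_0,\frac1M)$ for all nearby $T$; you should build the same existential slack into your sets, and monotonicity of $S$ in $\epsilon$ then still yields the liminf statement at the target scale. Second, your reduction to a single refining generating sequence via ``monotone under refinement and the sup is realized along a generating sequence'' is the standard fact, but the passage from an arbitrary finite $P$ to a nearby partition subordinate to some $P_m$ is exactly where the paper needs Lemma \ref{ham5} (partitions with $D(P,Q)<\eta^2$ have comparable covering numbers after adjusting $\epsilon$ and $\delta$); you are implicitly relying on that lemma and should say so.
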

This shows for any slow rate $a_n(t)$, the generic transformation 
has infinitely occurring time spans where the complexity is sublinear. 
This is due to ``super'' rigidity times for a typical tranformation. 
This raises the question of whether there exists an invertible 
rigid measure preserving transformation with infinite polynomial 
lower slow entropy.  We answer this question by constructing examples 
with infinite subexponential lower slow entropy in section \ref{rigid-inf-slow-ent-0}. 
This also answers question 6.1.2. 
\begin{thm}
There exists a family $\mathcal{F} \subset \mathcal{M}$ of rigid, weak mixing transformations 
such that given any subexponential rate $a_n(t)$, there exists a transformation 
in $\mathcal{F}$ which has infinite lower slow entropy with respect to $a_n(t)$. 
\end{thm}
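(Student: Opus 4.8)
The plan is to realize $\mathcal{F}$ as a parametrized family of cutting-and-stacking transformations of increasing rank, and then, given a subexponential rate $a_n(t)$, to tune the parameters so that the chosen member is simultaneously rigid, weak mixing, and of infinite lower slow entropy with respect to $a_n(t)$. Concretely, I would fix a scheme that at stage $k$ maintains $p_k$ Rokhlin columns of common height $h_k$, cuts each column into $r_k$ equal subcolumns, inserts spacer levels according to a chosen pattern, and restacks to produce the stage-$(k+1)$ columns of height $h_{k+1}$. The family $\mathcal{F}$ is the collection of all transformations arising from admissible choices of $(p_k, r_k)$ and spacer patterns; the content of the theorem is that for each prescribed $a_n(t)$ one such choice works. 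Since a rank-one transformation has lower slow entropy at most $2$ with respect to $n^t$ by \cite{ferenczi1997measure}, it is essential that the rank $p_k$ be allowed to grow to infinity; this is what frees the construction from the polynomial ceiling.

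For rigidity I would single out the tower heights $h_k$ as the rigidity sequence: by arranging the spacer pattern so that, at stage $k$, a set of measure at least $1 - 2^{-k}$ is mapped by $T^{h_k}$ into itself with displacement confined to a vanishing fraction of the column, one obtains $\| U_T^{h_k} f - f \|_2 \to 0$ for every $f$ in a dense subalgebra, hence rigidity along $(h_k)$. Weak mixing is arranged in the standard way by choosing the spacer increments between consecutive subcolumns to vary enough that no nonconstant measurable eigenfunction can survive: a Chacon-type or randomized-spacer criterion rules out both rational and irrational eigenvalues. Neither of these requirements consumes more than a controlled portion of the spacer budget, so they leave room for the complexity estimate.

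The heart of the argument, and the step I expect to be the main obstacle, is a lower bound on the Hamming covering numbers $S(n, \epsilon)$ that is uniform in $n$, since lower slow entropy is governed by a liminf in $n$. The difficulty is precisely the tension with rigidity: near a rigidity height $h_k$ the transformation looks approximately periodic, and a naive count of distinct names can collapse there. To keep the liminf infinite I would prove a monotonicity-type estimate showing that, for every $n$ in a block $[h_{k-1}, h_k)$, the number of $\epsilon$-separated $\mathcal{P}$-$n$-names is bounded below by a quantity $N_k$ determined by the accumulated cutting multiplicities $r_1 \cdots r_{k-1}$ and the randomization of the spacers, so that the covering number does not dip within a stage. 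Because approximate rigidity lives in the $L^2$ (weak) topology at the single scale $h_k$, while the covering number records the Hamming complexity of all names of length $n$, the two are compatible: the sparse near-returns at $h_k$ do not force the intermediate names to repeat. Given the prescribed subexponential $a_n(t)$, I would then choose $p_k$ and $r_k$ recursively, large enough that $N_k \ge a_n(t)$ for all $t$ and all $n \ge h_{k-1}$ in the block, while keeping $\tfrac{1}{n}\log S(n,\epsilon) \to 0$ so that $T$ still has zero entropy and rigidity is not obstructed.

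Finally, I would assemble the three estimates by a diagonal recursion: at stage $k$, first commit the part of the spacer pattern that secures the rigidity estimate at $h_k$ and the eigenvalue-killing estimate for weak mixing, and only then select $p_k, r_k$ large enough to beat $a_n(t)$ on the block $[h_{k-1}, h_k)$. Letting $k \to \infty$ produces a transformation $T \in \mathcal{F}$ that is rigid, weak mixing, and satisfies $\liminf_{n} \{ t : a_n(t) \le S(n,\epsilon) \} = \infty$ for all small $\epsilon$, i.e.\ infinite lower slow entropy with respect to $a_n(t)$, as required.
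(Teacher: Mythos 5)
Your high-level architecture matches the paper's: an infinite-rank cutting-and-stacking family (infinite rank being forced by Ferenczi's bound on rank-ones, as you note), rigidity obtained by cutting each column into $r_k$ pieces and stacking left to right so that $T^{h_k}$ nearly fixes unions of levels, weak mixing by a Chacon-type spacer stage, and a recursive choice of parameters to beat the given rate. The gap is in the step you yourself flag as the main obstacle: you never supply the mechanism that produces the lower bound $N_k$ on the Hamming covering numbers. Saying $N_k$ is ``determined by the accumulated cutting multiplicities $r_1\cdots r_{k-1}$ and the randomization of the spacers'' is not enough: randomized spacers are the device used to push \emph{upper} slow entropy up in rank-one constructions, and they cannot by themselves give the separated-names count needed for a \emph{liminf} bound. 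What the paper actually uses is \emph{independent cutting and stacking}: starting from two building blocks whose $P$-names are a definite Hamming distance $d$ apart (arranged via an ergodic-averaging estimate), it concatenates them in all possible orders, so that a typical name of length $n$ decomposes into $m$ nearly independent blocks; the Hamming-ball volume bound (Lemmas \ref{ham1} and \ref{ham2}) then forces at least $(1-\theta-\eta)\,2^{m(1-\mathcal{H}(2(\varepsilon+\eta)/d+1/m))}$ balls, a count exponential in the number of independent blocks. Without this (or an equivalent combinatorial lemma) your ``monotonicity-type estimate'' is an assertion, not a proof, and it is exactly where the tension with rigidity must be resolved quantitatively: the paper does so by tracking the accumulated independence exponent $\sigma_k=\sum_{i<k}s_i$ and choosing $s_k$ inductively so that $\beta_k^n>k\,a_n(t_k)$ for the relevant $n$, with a one-stage lag so the bound survives through the intervening rigidity and weak-mixing stages.

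A second, smaller but genuine error: you ask to choose parameters so that $N_k\ge a_n(t)$ ``for all $t$'' on the block. For fixed $n$ the rate $a_n(t)$ need not be bounded in $t$, so this is impossible; what suffices (and what the paper does) is to fix a sequence $t_k\to\infty$ and beat $a_n(t_k)$ on the $k$-th range of lengths, which already makes $\liminf_n S_P(T,n,\varepsilon,\delta)/a_n(t)$ positive for every fixed $t$ and hence the lower slow entropy infinite.
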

In the final section, we give the connections with entropy convergence rate 
as defined by Frank Blume in \cite{blume2012relation}. 

\section{Preliminaries}
We describe the setup and then give a few lemmas used in the proofs 
of our main results. 

\subsection{Definitions}
Given an alphabet $\alpha_1, \alpha_2, \ldots , \alpha_r$, a codeword of length $n$ 
is a vector $w = \langle w_1, w_2, \ldots , w_{n} \rangle$ 
$= \langle w_i \rangle_{i=1}^{n}$ 
such that $w_i \in \{ \alpha_1, \ldots , \alpha_r \}$ for $1 \leq i \leq n$.  
Our codewords will be obtained from a measure preserving system 
$(X, \B, \mu, T)$ and finite partition $P = \{ p_1, p_2, \ldots , p_r \}$.  
In this case, we will consider the alphabet to be $\{ 1, \ldots , r \}$.  
Given $x \in X$ and $n \in \natural$, define the codeword 
$\vec{P}_n (x) = \langle w_i \rangle_{i=1}^{n}$ such that 
$T^{i-1} x \in p_{w_i}$.  When using this notation, the transformation will be fixed. 

Let $w, w^{\prime}$ be codewords of length $n$. 
The (normalized) Hamming distance is defined as:
\[
d(w, w^{\prime}) = \frac{1}{n} \sum_{i=1}^{n} \big( 1 - \delta_{w_i w_i^{\prime}} \big) . 
\]
Given a codeword $w$ of length $n$ and $\varepsilon > 0$, an $\varepsilon$-ball is the subset 
$V \subseteq \{ 1,\ldots ,r \}^{n}$ such that 
$d(w,v) < \varepsilon$ for $v \in V$. 
We will denote the $\varepsilon$-ball as $B_{\varepsilon}(w)$.  If given a transformation $T$ 
and partition $P$, define 
\[
B_{\varepsilon}^{T,P}(w) = \{ x\in X: d( \vec{P}_{n} (x) , w ) < \varepsilon \} . 
\]
Given $\varepsilon > 0$, $\delta > 0$, $n\in \natural$, 
finite partition $P=\{ p_1, p_2, \ldots , p_r \}$ and 
dynamical system $(X, \B, \mu, T)$, define $S_P(T,n,\varepsilon, \delta) = S$ as: 
\[
S = \min{\{ k : \exists v_1, \ldots , v_k \in \{1,\ldots,r\}^{n}\ 
\mbox{such that}\ \mu \big( \bigcup_{i=1}^{k} B_{\varepsilon}^{T,P}(v_i) \big) \geq 1 - \delta \}} . 
\]

Now we give the definition of upper and lower slow entropy 
for $\mathbb{Z}$-actions.  For more general discrete amenable group 
actions, the interested reader may see the survey \cite{KKW2020}.  
Also, in \cite{hochman2012slow}, slow entropy is used to construct 
infinite-measure preserving $\mathbb{Z}^2$-actions 
which cannot be realized as a group of diffeomorphisms 
of a compact manifold preserving a Borel measure.  
Let $T$ be an invertible measure preserving transformation 
defined on a standard probability space $(X, \B, \mu)$. 
Let $a=\{ a_n(t) : n \in \natural, t>0 \}$ 
be a family of positive sequences monotone in $t$ and such that 
$\lim_{n\to \infty} a_n(t) = \infty$ for $t > 0$.  
Define the upper (measure-theoretic) slow entropy of $T$ with respect 
to a finite partition $P$ as 
\begin{align*}
\mbox{s-} \overline{\mathcal{H}}_{a}^{\mu} (T,P) = &
\lim_{\delta\to 0}\lim_{\varepsilon\to 0} \ \mbox{s-}\overline{\mathcal{H}} 
\big( \varepsilon, \delta, P \big) , \\ 
\mbox{where }\ \mbox{s-}\overline{\mathcal{H}} 
\big( \varepsilon, \delta, P \big) = & 
\left\{\begin{array}{ll}
\sup{ \overline{\mathcal{G}}(\varepsilon, \delta, P) }, & \mbox{if }\ 
 \overline{\mathcal{G}} 
\big( \varepsilon, \delta, P \big)\neq \emptyset , \\ 
0, & \mbox{if }\ \overline{\mathcal{G}} 
\big( \varepsilon, \delta, P \big) = \emptyset , 
\end{array}
\right. \\ 
\mbox{and } \ \overline{\mathcal{G}} \big( \varepsilon, \delta, P \big) = & 
\ \{ t > 0 : \limsup_{n\to\infty} \frac{S_P(T,n,\varepsilon,\delta)}{a_n(t)} > 0 \} . 
\end{align*}
The upper slow entropy of $T$ with respect to $a_n(t)$ is defined as 
\[
\mbox{s-} \overline{\mathcal{H}}_{a}^{\mu} (T) = 
\sup_{P} \ \mbox{s-} \overline{\mathcal{H}}_{a}^{\mu} (T,P) . 
\]
To define the lower slow entropy of $T$, replace $\limsup$ 
in the definition above with $\liminf$.  

\subsection{Supporting lemmas}
Define the binary entropy function, 
\[
\mathcal{H} ( x ) = - x \log_2{(x)} - \big( 1 - x \big) \log_2{(1-x)} . 
\]
We give some preliminary lemmas involving binary codewords and measurable partitions 
that are used in the main results. 
\begin{lem}
\label{ham1}
Suppose $w_1, w_2$ are binary words of length $\ell$ with Hamming distance $d = d(w_1, w_2) > 0$. 
Let $m \in \natural$ and $C$ be the set of all $2^m$ codewords consisting 
of all possible sequences of words $w$ from $\{ w_1, w_2 \}$ of length $n=m\ell$. 
Given $\varepsilon, \theta > 0$ with $\frac{\varepsilon}{d} + \frac{1}{m} < \frac{1}{2}$, 
the minimum number $S$ of $\varepsilon$-balls required to cover $1-\theta$ 
of the words in $C$ satisfies: 
\[
S \geq \big( 1 - \theta \big) 
2^{m \big( 1-\mathcal{H}( \frac{2\varepsilon}{d} + \frac{1}{m} ) \big) } . 
\]
\end{lem}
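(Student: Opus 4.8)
The plan is to exploit the block structure of $C$ to reduce the whole estimate to a volume bound for Hamming balls in $\{0,1\}^m$. First I would set up the natural bijection between $C$ and $\{0,1\}^m$: a codeword $c \in C$ is determined by a string $x = \langle x_1, \ldots, x_m \rangle \in \{0,1\}^m$, where $x_j = 0$ (resp. $x_j = 1$) records that the $j$-th length-$\ell$ block of $c$ equals $w_1$ (resp. $w_2$). The key observation is that this bijection scales Hamming distance by exactly the factor $d$: if $c, c'$ correspond to $x, x'$, then $c$ and $c'$ agree on every block where $x$ and $x'$ agree and differ in exactly $d\ell$ coordinates on every block where $x$ and $x'$ disagree, so the normalized distances satisfy $d(c,c') = d \cdot d_H(x,x')$, where $d_H$ denotes the normalized Hamming distance on $\{0,1\}^m$.

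Next I would bound the number of codewords of $C$ that a single $\varepsilon$-ball can contain. Fix a center $v \in \{0,1\}^n$ with $C \cap B_{\varepsilon}(v) \neq \emptyset$ and choose some $c_0 \in C \cap B_{\varepsilon}(v)$ with label $x_0$. By the triangle inequality every $c \in C \cap B_{\varepsilon}(v)$ satisfies $d(c,c_0) < 2\varepsilon$, and by the scaling identity its label $x$ then satisfies $d_H(x,x_0) < \frac{2\varepsilon}{d}$. Thus $C \cap B_{\varepsilon}(v)$ injects into the Hamming ball of radius $\frac{2\varepsilon}{d}$ about $x_0$ in $\{0,1\}^m$, whose cardinality is $\sum_{0 \le j < (2\varepsilon/d) m} \binom{m}{j}$. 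Applying the standard volume estimate $\sum_{0 \le j \le \lambda m} \binom{m}{j} \le 2^{m \mathcal{H}(\lambda)}$, valid for $\lambda \le \frac12$, together with the monotonicity of $\mathcal{H}$ on $[0,\frac12]$ and a $\frac1m$ margin to absorb the strict inequality and the integer rounding, I would conclude that each $\varepsilon$-ball meets at most $2^{m \mathcal{H}(\frac{2\varepsilon}{d} + \frac1m)}$ codewords of $C$. The hypothesis $\frac{\varepsilon}{d} + \frac1m < \frac12$ is what keeps the relevant radius in the regime where this entropy estimate is available and $\mathcal{H}$ is increasing.

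Finally, a pigeonhole argument closes the proof: if $S$ balls cover at least $1-\theta$ of the $2^m$ codewords of $C$, then comparing counts gives $S \cdot 2^{m \mathcal{H}(\frac{2\varepsilon}{d} + \frac1m)} \ge (1-\theta) 2^m$, which rearranges to the claimed bound $S \ge (1-\theta) 2^{m(1 - \mathcal{H}(\frac{2\varepsilon}{d} + \frac1m))}$. I expect the main obstacle to be the per-ball count of the second step. The scaling identity itself is clean, but the covering centers $v$ range over all of $\{0,1\}^n$ rather than over $C$, so one must recenter at a genuine codeword $c_0$ — paying the factor $2$ in the radius — before the combinatorics of $\{0,1\}^m$ can be brought to bear; tracking the $\frac1m$ discretization correction and checking that the argument of $\mathcal{H}$ stays below $\frac12$ under the stated hypothesis is where the bookkeeping concentrates.
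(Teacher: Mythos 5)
Your proposal is correct and follows essentially the same route as the paper's proof: identify $C$ with $\{0,1\}^m$ (scaling the normalized Hamming distance by the factor $d$), recenter each covering ball at a genuine codeword at the cost of doubling the radius, apply the binomial-volume bound $\sum_{j\le\lambda m}\binom{m}{j}\le 2^{m\mathcal{H}(\lambda)}$ with the $\tfrac1m$ rounding margin, and finish by pigeonhole. The only caveat --- shared with the paper's own proof --- is that the stated hypothesis $\tfrac{\varepsilon}{d}+\tfrac1m<\tfrac12$ only forces the doubled radius $\tfrac{2\varepsilon}{d}+\tfrac1m$ below $1-\tfrac1m$, not below $\tfrac12$ as the volume bound requires, so strictly speaking both arguments need the stronger hypothesis $\tfrac{2\varepsilon}{d}+\tfrac1m\le\tfrac12$ (which does hold in all of the paper's applications of the lemma).
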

\begin{proof}
The proof follows from a standard bound on the size of Hamming balls 
\cite{macwilliams1977theory} (p.310).  
Suppose $v_1, \ldots , v_j$ are a minimum number of centers such that 
$\varepsilon$-balls $B_{\varepsilon}(v_i)$ cover at least $1 - \theta$ 
of codewords in $C$.  For each $i$, choose $u_i \in B_{\varepsilon}(v_i)$.  
Thus, $B_{2\varepsilon}(u_i) \supseteq B_{\varepsilon}(v_i)$ and 
the $2\varepsilon$-balls $B_{2\varepsilon}(u_i)$ cover at least 
$1 - \theta$ of the codewords in $C$.  

This reduces the problem to a basic Hamming ball size question.  
Since all words are generated by $w_1, w_2$, we can map $w_1$ to 0 and $w_2$ to 1, 
and consider the number of Hamming balls needed to cover $1-\theta$ 
of all binary words of length $m$. 
Thus, if at least 
$\lceil \frac{2\varepsilon}{d} m \rceil$ 
words differ, then the distance is greater than or equal to $2\varepsilon$.  
Also, $\lceil \frac{2\varepsilon}{d} m \rceil \leq m 
\big( \frac{2\varepsilon}{d} + \frac{1}{m} \big)$.  
By \cite{macwilliams1977theory}(p.310), a Hamming ball of radius 
$\frac{2\varepsilon}{d}$ has a volume less than or equal to:
\[
2^{ m \mathcal{H}( \frac{2\varepsilon}{d} + \frac{1}{m} ) } . 
\]
Therefore, the minimum number of balls required to cover at least $(1-\theta)$ of the space is: 
\[
\big( 1 - \theta \big) 2^{ m \big( 1 - \mathcal{H}( \frac{2\varepsilon}{d} + \frac{1}{m} ) \big) } . 
\]
\end{proof}

\begin{lem}
\label{ham2}
Suppose the setup is similar to Lemma \ref{ham1} and there are two generating words 
$w_1, w_2$ of length $\ell$ with distance $d = d(w_1,w_2)$.  
Suppose $C$ is the set of $2^m$ codewords consisting of all possible 
sequences of blocks of either $w_1$ or $w_2$.  
Let $b\in \real$ such that $0 < b \leq 1$. 
Define $\phi : A \to C$ and measure $\mu$ 
such that $\mu ( \{ x\in A : \phi(x)=v \} ) = \frac{b}{2^m}$ 
for $v \in C$.  
Suppose $\psi: A \to C$ is a map 
satisfying:  
\[
\mu \Big( \{ x \in A: d ( \psi(x), \phi(x) ) < \eta \} \Big) > \big( 1 - \eta \big) b . 
\]
The minimum number $S$ of $\varepsilon$-Hamming balls $B$ such that 
\[
\mu \big( \{ x\in A : \psi(x) \in B \} \big) \geq 1 - \theta 
\]
satisfies 
\[
S  \geq \big( 1 - \theta - \eta \big) 2^{m(1 - \mathcal{H}(\frac{2(\varepsilon + \eta)}{d} + \frac{1}{m}))} . 
\]
\end{lem}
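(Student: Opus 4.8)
The plan is to reduce Lemma~\ref{ham2} to the already-established counting bound of Lemma~\ref{ham1} by showing that any small collection of $\varepsilon$-balls that captures most of the $\psi$-mass must, after a slight enlargement of radius, also capture most of the $\phi$-mass, which is uniformly distributed over all $2^m$ codewords. First I would fix a minimal family of $\varepsilon$-Hamming balls $B_1, \ldots, B_S$ with $\mu(\{x \in A : \psi(x) \in B_j \text{ for some } j\}) \geq 1 - \theta$. The key observation is that the hypothesis on $\psi$ guarantees a set $A_0 = \{x \in A : d(\psi(x), \phi(x)) < \eta\}$ of $\mu$-measure exceeding $(1-\eta)b$. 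On the complement of the low-mass exceptional set, proximity of $\psi(x)$ to a ball center forces proximity of $\phi(x)$ to the same center.

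The main step is the triangle inequality bookkeeping. For $x$ lying both in $A_0$ and in the $\psi$-preimage of some ball $B_j = B_\varepsilon(v_j)$, I would estimate
\[
d(\phi(x), v_j) \leq d(\phi(x), \psi(x)) + d(\psi(x), v_j) < \eta + \varepsilon,
\]
so that $\phi(x) \in B_{\varepsilon + \eta}(v_j)$. Intersecting the good $\psi$-covering event with $A_0$ costs at most $\eta b$ in measure (the measure of $A \setminus A_0$ is at most $b - (1-\eta)b = \eta b$), giving
\[
\mu\Big( \{ x \in A : \phi(x) \in B_{\varepsilon+\eta}(v_j) \text{ for some } j \} \Big) \geq \big(1 - \theta\big) - \eta b \geq \big(1 - \theta - \eta\big)b,
\]
where I am using $0 < b \leq 1$ to absorb the comparison. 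Because $\phi$ distributes mass $b/2^m$ uniformly over $C$, the $\phi$-measure captured translates directly into a fraction of codewords of $C$ covered: the $(\varepsilon+\eta)$-balls $B_{\varepsilon+\eta}(v_j)$ cover at least a $(1 - \theta - \eta)$ fraction of the $2^m$ words in $C$.

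At this point the problem is exactly the Hamming-covering count of Lemma~\ref{ham1}, but with the ball radius $\varepsilon$ replaced by $\varepsilon + \eta$ and the uncovered fraction $\theta$ replaced by $\theta + \eta$. Applying that lemma with these substitutions yields
\[
S \geq \big( 1 - \theta - \eta \big)\, 2^{m\left(1 - \mathcal{H}\big(\frac{2(\varepsilon+\eta)}{d} + \frac{1}{m}\big)\right)},
\]
which is the claimed bound. I expect the only genuinely delicate point to be the measure accounting in the second step: one must be careful that discarding the exceptional set $A \setminus A_0$ and intersecting with the covering event do not together erode more than $\theta + \eta$ of the total normalized mass, and that the factor $b$ is handled consistently so that the final fraction is stated relative to the $2^m$ codewords rather than to the total measure $b$. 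The invocation of Lemma~\ref{ham1} also tacitly requires the radius condition $\frac{\varepsilon+\eta}{d} + \frac{1}{m} < \frac12$, which I would either assume inherits from the hypotheses or note as a mild standing requirement.
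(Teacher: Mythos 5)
Your proposal is correct and follows essentially the same route as the paper's own proof: pass to the exceptional set where $d(\psi(x),\phi(x)) \geq \eta$, use the triangle inequality to enlarge each $\varepsilon$-ball to an $(\varepsilon+\eta)$-ball containing $\phi(x)$, and then invoke Lemma~\ref{ham1} with radius $\varepsilon+\eta$ and uncovered fraction $\theta+\eta$. Your version is in fact more careful than the paper's on the measure bookkeeping (the factor $b$) and in flagging the tacit radius condition $\frac{\varepsilon+\eta}{d}+\frac{1}{m}<\frac12$ needed to apply Lemma~\ref{ham1}.
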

\begin{proof}
Let $E = \{ x \in A: d ( u(x), v(x) ) \geq \eta \}$.  
For $x \in A\setminus E$, $B_{\varepsilon}(\psi(x)) \subseteq B_{\varepsilon+\eta}(\phi(x))$.  
By Lemma \ref{ham1}, 
\[
\Big( 1 - \theta - \eta \Big) 2^{m( 1 - \mathcal{H}(\frac{2(\varepsilon+\eta)}{d}+\frac{1}{m}))} 
\]
$(\varepsilon + \eta)$-balls are needed to cover $1 - \theta - \eta$ of $\phi(x)$ words.  
Thus, the total number of $\varepsilon$-balls 
needed to cover $(1 - \theta)$ mass of $\psi(x)$ words is at least:
\[
\Big( 1 - \theta - \eta \Big) 2^{m( 1 - \mathcal{H}(\frac{2(\varepsilon+\eta)}{d}+\frac{1}{m}))} . 
\]
\end{proof}
The following lemma is used in the proof of Proposition \ref{dense-infinity-prop}. 
\begin{lem}
\label{ham4}
Let $\eta > 0$ and $r,n \in \natural$.  
Let $(X, \B, \mu, T)$ be an invertible measure preserving system 
and $b$ a set of positive measure such that $\hat{b} = \bigcup_{i=0}^{n-1} T^i b$ 
is a disjoint union (except for a set of measure zero).  
Suppose $P = \{ p_1, p_2, \ldots , p_r \}$ 
and $Q = \{ q_1, q_2, \ldots , q_r \}$ are partitions such that 
\begin{align}
\sum_{i=1}^{r} \mu \big( (p_i \cap \hat{b}) \triangle (q_i \cap \hat{b}) \big) 
< \eta^2 \mu(\hat{b}) . \label{part-comp-eqn}
\end{align}
Then for $n \in \natural$, 
\[
\mu \big( \{ x \in b : d ( \vec{P}_n ( x ) , \vec{Q}_n ( x ) ) < \eta \} \big) 
> \big( 1 - \eta \big) \mu(b) . 
\]
\end{lem}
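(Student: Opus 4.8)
The plan is to translate the hypothesis, which is an $L^1$-type closeness of the two partitions over $\hat b$, into a bound on the \emph{average} Hamming distance between the two codewords over $x\in b$, and then to promote that averaged bound to a measure estimate via Markov's inequality. The organizing device is the \emph{disagreement set}
\[
D=\{y\in X: y\ \mbox{lies in different atoms of}\ P\ \mbox{and}\ Q\}=\bigcup_{i=1}^{r}(p_i\setminus q_i).
\]
Its role is that the codewords $\vec P_n(x)$ and $\vec Q_n(x)$ differ in coordinate $i$ exactly when $T^{i-1}x\in D$, so that
\[
n\, d\big(\vec P_n(x),\vec Q_n(x)\big)=\sum_{i=0}^{n-1}\mathbf{1}_D(T^i x).
\]

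First I would rewrite the hypothesis in terms of $D$. A point $y\in\hat b$ contributes to $\sum_{i=1}^{r}\mu\big((p_i\cap\hat b)\triangle(q_i\cap\hat b)\big)$ precisely when $y\in D$, and when it does it is counted in exactly two of the symmetric differences, namely the $P$-atom it leaves and the $Q$-atom it enters. Hence this sum equals $2\mu(D\cap\hat b)$, and (\ref{part-comp-eqn}) gives $\mu(D\cap\hat b)<\tfrac12\eta^2\mu(\hat b)$. Since $\hat b$ is a disjoint union (mod $\mu$) of the $n$ sets $T^i b$ and $T$ preserves $\mu$, each $T^i b$ has measure $\mu(b)$, so $\mu(\hat b)=n\mu(b)$ and therefore $\mu(D\cap\hat b)<\tfrac12\eta^2 n\mu(b)$.

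Then comes the central step, the averaging of the Hamming distance. Integrating the displayed identity over $b$ and using $T$-invariance, together with $\int_b \mathbf{1}_D(T^i x)\,d\mu=\mu(T^i b\cap D)$, gives
\[
\int_b n\, d\big(\vec P_n(x),\vec Q_n(x)\big)\,d\mu=\sum_{i=0}^{n-1}\mu(T^i b\cap D)=\mu\Big(\bigcup_{i=0}^{n-1}(T^i b\cap D)\Big)=\mu(D\cap\hat b),
\]
where the middle equality is where the disjointness of the tower levels $T^i b$ is used to convert the sum of measures into a single measure, and the last equality holds because $\bigcup_{i=0}^{n-1}T^i b=\hat b$. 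Dividing by $n$ and combining with the bound above yields $\int_b d\big(\vec P_n(x),\vec Q_n(x)\big)\,d\mu<\tfrac12\eta^2\mu(b)<\eta^2\mu(b)$.

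Finally, Markov's inequality applied to the nonnegative function $x\mapsto d\big(\vec P_n(x),\vec Q_n(x)\big)$ on $b$ gives
\[
\mu\big(\{x\in b: d(\vec P_n(x),\vec Q_n(x))\geq\eta\}\big)\leq\frac{1}{\eta}\int_b d\big(\vec P_n(x),\vec Q_n(x)\big)\,d\mu<\eta\,\mu(b),
\]
and passing to the complement within $b$ produces the claimed $\mu\big(\{x\in b: d(\vec P_n(x),\vec Q_n(x))<\eta\}\big)>(1-\eta)\mu(b)$. The genuinely load-bearing step is the averaging identity: one must keep careful track of how the disjointness of the levels $T^i b$ collapses the sum into $\mu(D\cap\hat b)$ and how the factor $n$ cancels exactly against $\mu(\hat b)=n\mu(b)$. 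The counting that produces the factor $2$ and the final Markov estimate are routine; indeed, if one prefers not to track the constant, the cruder inequality $\mu(D\cap\hat b)\leq\sum_{i=1}^{r}\mu\big((p_i\cap\hat b)\triangle(q_i\cap\hat b)\big)<\eta^2\mu(\hat b)$ already suffices to reach the conclusion.
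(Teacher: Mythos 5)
Your proof is correct and is essentially the paper's argument: both rest on the observation that the hypothesis bounds the average number of coordinates where the $P$- and $Q$-names disagree along the tower, and then convert that first-moment bound into a measure estimate. The paper packages this as a counting/contradiction argument over atoms of $\bigvee_{i=0}^{n-1}T^{-i}(P\vee Q)$, while you phrase it as an integral identity plus Markov's inequality, which is a cleaner rendering of the same mechanism.
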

\begin{proof}
Define 
\[
R_n = \bigvee_{i=0}^{n-1} T^{-i} \big( P \vee Q \big) . 
\]
Define 
\[
A = \{ p\in R_n\cap b : \# \{ i : 0\leq i < n, 
T^i p \subset \bigcup_{j=1}^{r} \big( p_j \cap q_j \big) \} \leq (1-\eta)n \} . 
\]
We show $\mu (A) < \eta \mu (b)$.  Otherwise, for $p\in A$ and $i$ such that 
$T^i p \subseteq p_j \cap q_k$ for $j \neq k$, this contributes $2\mu(p)$ 
to the sum (\ref{part-comp-eqn}). 
Thus, for $p \in A$, the number of such $i$ gives measure greater than or equal to 
$2\mu(p) \eta n$.  Adding up over all $p\in A$ gives measure greater than or equal to 
$2\eta \mu(b) \eta n > \eta^2 \mu (\hat{b})$.  
For a.e. $x,y \in b\cap A^c$, $d ( \vec{P}_n ( x ) , \vec{Q}_n ( x ) ) < \eta$ 
and this holds for $\mu (b \cap A^c) > (1-\eta)\mu(b)$.  
\end{proof}
The following lemma is a more general version of Lemma \ref{ham4} and 
used in multiple places throughout this paper. 
Given two ordered partitions $P = \langle p_1, p_2, \ldots , p_r \rangle$ and 
$Q = \langle q_1, q_2, \ldots , q_r \rangle$, let 
\[
D(P,Q) = \sum_{i=1}^{r} \mu ( p_i \triangle q_{i} ) . 
\]
\begin{lem}
\label{ham5}
Let $(X, \B, \mu, T)$ be ergodic. 
Let $\eta > 0$ and $r,n \in \natural$.  Suppose $P = \langle p_1, p_2, \ldots , p_r \rangle$ 
and $Q = \langle q_1, q_2, \ldots , q_r \rangle$ are ordered partitions such that 
\begin{align}
D(P,Q) < \eta^2 . \label{part-comp-eqn2}
\end{align}
Then for $n \in \natural$, 
\[
\mu \big( \{ x: d ( \vec{P}_n ( x ) , \vec{Q}_n ( x ) ) < \eta \} \big) > 1 - \eta . 
\]
\end{lem}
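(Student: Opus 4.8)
The plan is to reduce the Hamming distance between the $P$-name and the $Q$-name of a point $x$ to a Birkhoff-type sum of the indicator of the set where the two partitions disagree, and then to control that sum on the measure-preserving space by a first-moment (Markov) estimate. The whole argument is an averaging argument, so ergodicity will not actually be needed beyond measure-preservation.

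First I would introduce the \emph{mismatch set}
\[
E \;=\; X \setminus \bigcup_{j=1}^{r} (p_j \cap q_j) \;=\; \bigcup_{j \neq k} (p_j \cap q_k),
\]
the set of points landing in different-indexed cells of $P$ and $Q$. The defining observation is that, with the convention $\vec{P}_n(x) = \langle w_i \rangle_{i=1}^n$ where $T^{i-1}x \in p_{w_i}$, the $i$-th coordinates of $\vec{P}_n(x)$ and $\vec{Q}_n(x)$ differ exactly when $T^{i-1}x \in E$. Hence
\[
n \, d\big( \vec{P}_n(x), \vec{Q}_n(x) \big) \;=\; \sum_{i=0}^{n-1} \mathbf{1}_E(T^i x).
\]

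Next I would relate $\mu(E)$ to $D(P,Q)$. A point $x \in p_a \cap q_b$ with $a \neq b$ lies in both $p_a \setminus q_a$ and $q_b \setminus p_b$, so it contributes to exactly two of the symmetric differences $p_i \triangle q_i$ appearing in $D(P,Q)$; summing over $E$ gives $D(P,Q) = 2\mu(E)$, and in particular $\mu(E) < \tfrac{1}{2}\eta^2 < \eta^2$. The estimate then follows by averaging: since $T$ preserves $\mu$,
\[
\int_X \sum_{i=0}^{n-1} \mathbf{1}_E(T^i x)\, d\mu \;=\; \sum_{i=0}^{n-1} \mu(T^{-i}E) \;=\; n\,\mu(E),
\]
so by Markov's inequality applied to this nonnegative sum,
\[
\mu\big(\{x : d(\vec{P}_n(x), \vec{Q}_n(x)) \geq \eta\}\big) \;=\; \mu\Big(\Big\{x : \sum_{i=0}^{n-1}\mathbf{1}_E(T^i x) \geq \eta n\Big\}\Big) \;\leq\; \frac{\mu(E)}{\eta} \;<\; \eta,
\]
and taking complements yields the claimed bound $\mu(\{x : d < \eta\}) > 1 - \eta$.

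I do not expect a genuine obstacle here, as this is essentially the global analogue of Lemma \ref{ham4} with the disjointness hypothesis replaced by an average over all of $X$. The only points requiring care are the double-counting factor of $2$ in the identity $D(P,Q) = 2\mu(E)$ and tracking the inequality directions so that Markov delivers exactly the $1-\eta$ threshold. I would also remark that the proof uses only that $T$ preserves $\mu$, so the ergodicity hypothesis is stronger than strictly necessary for this statement and is presumably carried over from the applications in which the lemma is invoked.
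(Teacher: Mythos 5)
Your proof is correct, and it takes a genuinely more direct route than the paper. The paper fixes an intermediate $\eta_1$ with $D(P,Q)<\eta_1^2<\eta^2$, invokes the Rokhlin lemma to build a tower $\{I_0,\dots,I_{n-1}\}$ covering all but $\eta_0/n$ of the space, and then runs a counting/contradiction argument on the atoms of $\bigvee_{i=0}^{n-1}T^{-i}(P\vee Q)$ level by level, summing the per-level estimates at the end; this is where the ergodicity hypothesis is actually consumed. You instead observe the exact identity $n\,d(\vec{P}_n(x),\vec{Q}_n(x))=\sum_{i=0}^{n-1}\mathbf{1}_E(T^i x)$ for the mismatch set $E$, compute $D(P,Q)=2\mu(E)$ (the factor $2$ is right: each point of $E$ lies in exactly two of the sets $p_j\triangle q_j$), and apply Markov's inequality to the Birkhoff sum, whose mean is $n\mu(E)<n\eta^2/2$ by measure preservation alone. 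The two arguments are the same first-moment estimate at heart --- the paper's ``contributes $2\mu(p)$ to the sum'' step is exactly your double-counting identity localized to tower columns --- but your global version is shorter, avoids the Rokhlin lemma and the auxiliary parameters $\eta_0,\eta_1$, even gains a spare factor of $2$ in the final bound ($\eta/2$ rather than $\eta$), and, as you note, shows the ergodicity hypothesis is not needed: measure preservation suffices.
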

\begin{proof}
Let $\eta_1 \in \real$ such that 
\begin{align}
\sum_{i=1}^{r} \mu ( p_i \triangle q_i ) < \eta_1^2 < \eta^2 . \label{part-comp-eqn2}
\end{align}
Define 
\[
R_n = \bigvee_{i=0}^{n-1} T^{-i} \big( P \vee Q \big) . 
\]
Let $\eta_0 < \frac{1}{2}$ and $C = \{ I_0, \ldots , I_{n-1} \}$ be a Rohklin tower 
such that $\mu ( \bigcup_{i=0}^{n-1} I_i ) > 1 - \frac{\eta_0}{n}$.  
Define 
\[
A_0 = \{ p\in R_n\cap I_0 : \# \{ i : 0\leq i < n, 
T^i p \subset \cup_{j=1}^{r} \big( p_j \cap q_j \big) \} \leq (1-\eta_1)n \} . 
\]
We show $\mu (A_0) < \eta_1 \mu (I_0)$.  Otherwise, for each $i$ such that 
$T^i p \subseteq p_j \cap q_k$ for $j \neq k$, this contributes $2\mu(p)$ 
to the sum (\ref{part-comp-eqn2}). 
Thus, for $p \in A_0$, the number of such $i$ gives measure greater than 
$2\mu(p) \eta_1 n$.  Adding up over all $p\in A_0$ gives measure greater than 
$2\eta_1 \mu(I_0) \eta_1 n > 2 \eta_1^2 (1 - \eta_0)$.  
For $x,y \in I_0\cap A_0^c$, $d ( \vec{P}_n ( x ) , \vec{Q}_n ( x ) ) < \eta_1$ 
and this holds for $\mu (I_0 \cap A_0^c) > (1-\eta_1)\mu(I_0)$.  
By showing the analogous result for $A_k$ defined as: 
\[
A_k = \{ p\in R_n\cap I_k : \# \{ i : 0\leq i < n, 
T^i p \subset \cup_{j=1}^{r} \big( p_j \cap q_j \big) \} \leq (1-\eta_1)n \} , 
\]
then $\mu (I_k \cap A_k^c) > (1-\eta_1)\mu(I_k)$. 
Hence, 
\[
\sum_{k=0}^{n-1} \mu (I_k \cap A_k^c) > (1 - \eta_1)(1 - \eta_0) . 
\]
Therefore, since $\eta_0$ may be chosen arbitrarily small, our claim holds. 
\end{proof}


\subsection{Infinite rank}
A result of Ferenczi \cite{ferenczi1997measure} shows that the lower slow entropy of a rank-one 
transformation is less than or equal to 2 with respect to $a_n(t)=n^t$.  
Thus, our examples in section \ref{rigid-inf-slow-ent} are not rank-one 
and instead, have infinite rank. We will adapt the technique 
of independent cutting and stacking to construct 
rigid transformations with infinite lower slow entropy. 
Independent cutting and stacking was originally defined in 
\cite{friedman1972mixing, shields1973cutting}.  A variation of this technique is used 
in \cite{katok1997slow} to obtain different types of important counterexamples.  
For a general guide on the cutting and stacking technique, see \cite{Fri92}.

\section{Generic class with zero lower slow entropy}
Let $a_n(t)$ be a sequence of real numbers such that $a_n(t) \geq a_n(s)$ for $t>s$ 
and $\lim_{n\to \infty} a_n(t) = \infty$ for $t > 0$. 
For $N,t, M \in \natural$ and any finite partition $P$, define 
\begin{align}
\underline{G}(N,t, M, P) = \{ T \in \mathcal{M} : \exists n > N, 0 < \delta < \frac{1}{M} 
\ \mbox{such that}\ S_{P}(T, n, \delta, \delta) < \frac{a_n(\frac{1}{t})}{N} \} . 
\end{align}

\begin{prop}
\label{open-zero-prop}
For $N,t,M \in \natural$ and finite partition $P$, 
the set $\underline{G}(N,t,M,P)$ is open in the weak topology on $\mathcal{M}$. 
\end{prop}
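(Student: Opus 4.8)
The plan is to show that every $T_0 \in \underline{G}(N,t,M,P)$ has a weak neighborhood contained in $\underline{G}(N,t,M,P)$. Fix such a $T_0$ and use the definition to extract an integer $n > N$ and a tolerance $\delta$ with $0 < \delta < \frac{1}{M}$ for which $S_0 := S_P(T_0, n, \delta, \delta) < \frac{a_n(1/t)}{N}$. Let $v_1, \ldots, v_{S_0} \in \{1,\ldots,r\}^n$ be centers realizing this minimum, so that $\mu\big(\bigcup_{i=1}^{S_0} B_\delta^{T_0,P}(v_i)\big) \ge 1 - \delta$. The two strict inequalities $\delta < \frac{1}{M}$ and $S_0 < \frac{a_n(1/t)}{N}$ are the slack I intend to exploit: I will keep $n$ and the centers $v_i$ fixed and only enlarge $\delta$ slightly to a $\delta'$ still below $\frac{1}{M}$, which can only increase the covered mass and relax the covering requirement.

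The key structural observation is that for fixed $n$ and $P$, the set $B_\delta^{T,P}(v_i)$ is a finite union of atoms of $\bigvee_{j=0}^{n-1} T^{-j} P$, and the selection of which atoms appear is combinatorial and independent of $T$. Indeed $\vec{P}_n(x)$ is constant on each atom $A_c^T = \bigcap_{j=0}^{n-1} T^{-j} p_{c_{j+1}}$, $c \in \{1,\ldots,r\}^n$, taking the value $c$ there, so $B_\delta^{T,P}(v_i) = \bigcup_{c \in W_i} A_c^T$, where $W_i = \{ c : d(c, v_i) < \delta \}$ is a fixed finite set determined only by the Hamming geometry of the codewords. Thus the only $T$-dependence sits in the sets $A_c^T$ themselves.

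I would then establish that $T \mapsto \mu\big(\bigcup_{i=1}^{S_0} B_\delta^{T,P}(v_i)\big)$ is continuous at $T_0$ in the weak topology. Weak convergence $T \to T_0$ means $U_T \to U_{T_0}$ in the strong operator topology; applied to indicator functions this gives $\mu(T^{-1}A \triangle T_0^{-1}A) \to 0$ for every $A \in \B$. Since the Koopman operators are isometries, a telescoping estimate upgrades SOT convergence of $U_T$ to SOT convergence of each fixed power $U_T^j$, whence $\mu(T^{-j}A \triangle T_0^{-j}A) \to 0$ for each $j < n$. Combining these through the elementary bound $\mu\big(\bigcap_j E_j \triangle \bigcap_j F_j\big) \le \sum_j \mu(E_j \triangle F_j)$ yields $\mu(A_c^T \triangle A_c^{T_0}) \to 0$ for each codeword $c$, and summing over the finitely many $c \in \bigcup_i W_i$ gives that the covered mass converges to its value at $T_0$.

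Finally I combine continuity with the slack. Fix $\delta' \in (\delta, \frac{1}{M})$, which exists because $\delta < \frac{1}{M}$, and set $\gamma_0 = \delta' - \delta > 0$. By the continuity just described there is a weak neighborhood $\mathcal{U}$ of $T_0$ on which $\mu\big(\bigcup_{i} B_\delta^{T,P}(v_i)\big) \ge (1 - \delta) - \gamma_0 = 1 - \delta'$. Since $B_\delta^{T,P}(v_i) \subseteq B_{\delta'}^{T,P}(v_i)$, the same $S_0$ centers give $\mu\big(\bigcup_i B_{\delta'}^{T,P}(v_i)\big) \ge 1 - \delta'$, so $S_P(T, n, \delta', \delta') \le S_0 < \frac{a_n(1/t)}{N}$ for every $T \in \mathcal{U}$; together with $n > N$ and $\delta' < \frac{1}{M}$ this places $\mathcal{U} \subseteq \underline{G}(N,t,M,P)$. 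I expect the main obstacle to be the continuity step — specifically, passing from SOT convergence of $U_T$ to convergence of the finite-intersection sets $A_c^T$ in symmetric difference tightly enough to control the total covered mass; the combinatorial independence of $W_i$ from $T$ and the elementary symmetric-difference bounds are what make this manageable.
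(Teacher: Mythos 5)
Your proposal is correct and follows essentially the same route as the paper: fix $n$ and the covering centers, exploit the slack by enlarging $\delta$ to some $\delta' < \frac{1}{M}$, and use a weak neighborhood on which the relevant length-$n$ $P$-name structure (equivalently, the measures of the atoms of $\bigvee_{j=0}^{n-1}T^{-j}P$) is stable enough that the same centers still cover $1-\delta'$ of the space. The paper phrases the stability step as "a set $E$ of measure $>1-\alpha$ on which the $P$-names under $T_0$ and $T_1$ coincide," obtained by the same telescoping/symmetric-difference estimates you describe, so the two arguments differ only in bookkeeping.
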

\begin{proof}
Let $T_0 \in \underline{G}(N,t,M,P)$ and $n>N$, $0 < \delta_0 < \frac{1}{M}$ be such that 
$S_{P}(T, n, \delta_0, \delta_0) < \frac{a_n(\frac{1}{t})}{N}$. 
Let $P_n = \bigvee_{i=0}^{n-1} T_0^{-i} P$.  
Choose $\delta_1 \in \real$ such that $\delta_0 < \delta_1 < \frac{1}{M}$ .  
Let $\alpha = {\big( \delta_1 - \delta_0 \big)}/{2}$.  
In the weak topology, 
choose an open set $U$ containing $T_0$ such that for $T_1 \in U$, 
$0 \leq i < n$, and $p \in P_n$, 
\begin{align}
\mu ( T_1 T_0^i p \triangle T_0^{i+1}p ) &\leq \big( \frac{\alpha}{n^2} \big) \mu ( p ) \label{eqn1} .
\end{align}
We will prove inductively in $j$ for $p \in P_n$ that 
\begin{align}
\mu ( T_0^{j} p \triangle T_1^{j} p ) &\leq \big( \frac{j\alpha}{n^2} \big) \mu ( p ) . \label{eqn2}
\end{align}
The case $j=1$ follows directly from (\ref{eqn1}): 
\begin{align*}
\mu ( T_0 p \triangle T_1 p ) &\leq \big( \frac{\alpha}{n^2} \big) \mu (p) . 
\end{align*}
Also, the case $j=0$ is trivial. 
Suppose equation (\ref{eqn2}) holds for $j = i$.  Below shows it holds for $j = i+1$: 
\begin{align*}
\mu ( T_0^{i+1} p \triangle T_1^{i+1} p ) &\leq \mu ( T_0^{i+1} p \triangle T_1 T_0^{i} p ) + 
\mu ( T_1 T_0^{i} p \triangle T_1^{i+1} p ) \\ 
&= \mu ( T_0^{i+1} p \triangle T_1 T_0^{i} p ) + \mu ( T_0^{i} p \triangle T_1^{i} p ) \\   
&\leq  \big( \frac{\alpha}{n^2} \big) \mu ( p ) + \big( \frac{i \alpha}{n^2} \big) \mu ( p ) \\ 
&= \big( \frac{(i+1)\alpha}{n^2} \big) \mu ( p ) . 
\end{align*}
For $p \in P_n$, let 
\[
E_p = \bigcap_{i=0}^{n-1} T_1^{-i} T_0^{i} p . 
\]
Thus, 
\begin{align}
\mu \big( E_p \big) &\geq \mu (p) - \sum_{i=1}^{n-1} \mu (p \triangle T_1^{-i}T_0^{i} p) \\ 
&= \mu (p) - \sum_{i=1}^{n-1} \mu (T_1^{i} p \triangle T_0^{i} p) \\
&> (1 - \alpha) \mu (p) . 
\end{align}
Hence, if $E = \bigcup_{p \in P_n} E_p$, $\mu(E) > 1 - \alpha$.  
Each $x \in E$ has the same $P$-name under $T_1$ and $T_0$.  
Suppose $V\subseteq 2^{\{0,1\}^n}$ is such that $A_0 = \bigcup_{v\in V} B^{T_0}_{\delta_0}(v)$ 
satisfies $\mu ( A_0 ) \geq 1 - \delta_0$ and 
\[
\mbox{card}(V) < \frac{a_n(\frac{1}{t})}{N} . 
\]
Let $A_1 = \bigcup_{v\in V} B^{T_1}_{\delta_1}(v)$ and 
$A_1^{\prime} = \bigcup_{v\in V} B^{T_1}_{\delta_0}(v)$.  
Since $\mu ( A_0 \triangle A_1^{\prime} ) \leq \mu(E^c) < \alpha$, then 
\[
\mu (A_1) \geq \mu (A_1^{\prime}) > 1 - \delta_0 - \alpha > 1 - \delta_1 . 
\]
Therefore, since $\mbox{card}(V) < \frac{a_n({1}/{t})}{N}$, 
$S_{P}(T_1, n, \delta_1, \delta_1) < \frac{a_n(\frac{1}{t})}{N}$ and we are done. 
\end{proof}

Now we prove the density of the class $\underline{G}(N,t,M,P)$. 
\begin{prop}
\label{dense-zero-prop}
For $N,t,M \in \natural$ and finite partition $P$, 
the set $\underline{G}(N,t,M,P)$ is dense in the weak topology on $\mathcal{M}$. 
\end{prop}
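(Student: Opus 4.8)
The plan is to show that every nonempty weakly open set meets $\underline{G}(N,t,M,P)$ by exhibiting a periodic transformation in it, after first observing that every periodic transformation automatically lies in $\underline{G}(N,t,M,P)$. Since the aperiodic transformations are dense in $\mathcal{M}$ (indeed residual), it is enough to approximate an arbitrary aperiodic $T_0$; density of $\underline{G}(N,t,M,P)$ then follows from the density of periodic maps together with the inclusion just mentioned.

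First I would record why periodicity forces membership in $\underline{G}$. Suppose $T_1\in\mathcal{M}$ is periodic with $T_1^{q}=I$. Then for each $x$ the sequence $i\mapsto \vec{P}_1(T_1^i x)$ is periodic of period dividing $q$, so the codeword $\vec{P}_n(x)$ is determined by its first $q$ coordinates. Hence for every $n\geq q$ the number of distinct length-$n$ codewords that actually occur is at most $r^{q}$. Using these finitely many occurring codewords as centers, the $\delta$-balls $B^{T_1,P}_{\delta}(\cdot)$ cover all of $X$ for any $\delta>0$, so $S_P(T_1,n,\delta,\delta)\leq r^{q}$ for every $n\geq q$. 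Because $\lim_{n\to\infty}a_n(1/t)=\infty$ while $r^{q}$ and $N$ are fixed, I may choose $n>\max(N,q)$ with $a_n(1/t)>N\,r^{q}$; then for any $\delta$ with $0<\delta<\frac1M$ we get $S_P(T_1,n,\delta,\delta)\leq r^{q}<\frac{a_n(1/t)}{N}$, so $T_1\in\underline{G}(N,t,M,P)$. The conceptual point is that the name-complexity of a periodic transformation saturates at scale $q$ and stays bounded as $n\to\infty$, while the normalizing rate $a_n(1/t)$ diverges.

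Next I would produce a periodic $T_1$ near a given aperiodic $T_0$. A basic weakly open neighborhood of $T_0$ is controlled by finitely many sets $A_1,\dots,A_k$ and a tolerance $\epsilon$, that is, by the quantities $\mu(T_1^{-1}A_j\triangle T_0^{-1}A_j)$. Fix a large integer $q$ and a small $\gamma>0$ and apply Rokhlin's lemma to $T_0$ to obtain a base $B$ with $B,T_0B,\dots,T_0^{q-1}B$ disjoint and $\mu\big(\bigcup_{i=0}^{q-1}T_0^{i}B\big)>1-\gamma$. Define $T_1=T_0$ on the lower levels $\bigcup_{i=0}^{q-2}T_0^{i}B$, let $T_1$ map the top level $T_0^{q-1}B$ back onto $B$ by the canonical measure isomorphism, and equip the remainder $R=X\setminus\bigcup_{i=0}^{q-1}T_0^{i}B$ with an independent period-$q$ structure; then $T_1^{q}=I$ everywhere and $T_1\in\mathcal{M}$. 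Since $T_1$ agrees with $T_0$ off a set $D$ with $\mu(D)\leq \frac1q+\gamma$, and both maps preserve $\mu$, one has $\mu(T_1^{-1}A_j\triangle T_0^{-1}A_j)\leq 2\mu(D)\leq 2\big(\tfrac1q+\gamma\big)$; taking $q$ large and $\gamma$ small makes this smaller than $\epsilon$. Thus $T_1$ is periodic, lies in the prescribed neighborhood, and by the previous paragraph belongs to $\underline{G}(N,t,M,P)$.

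The only genuine obstacle is the closing-up defect: repairing the Rokhlin tower alters $T_0$ on the top level, a set of measure roughly $\frac1q$, which is precisely why $q$ must be chosen large enough that this defect is absorbed into the allowed weak tolerance $\epsilon$. Everything else is routine, namely that the aperiodicity needed for Rokhlin's lemma is available on a dense set, and that realizing $T_1$ as an invertible, measure preserving, globally periodic map (including on the small remainder $R$) is standard. Combining the two steps, every nonempty weakly open set contains a periodic transformation lying in $\underline{G}(N,t,M,P)$, which establishes the asserted density.
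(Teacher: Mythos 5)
Your proof is correct, but it takes a genuinely different route from the paper's. The paper approximates a given transformation by a rank-one $T_0$, chooses a column $\mathcal{C}$ of height $h$ whose levels approximate $P$ by a partition $Q$, and perturbs $T_0$ only in the top level and the leftover set by cutting $\mathcal{C}$ into $k_1k_2$ subcolumns and stacking left to right; for $n=k_1h$ the points below the top $n$ levels then carry at most $h$ distinct $Q$-names, and Lemma \ref{ham5} converts these $h$ names into a cover of measure $1-\delta$ by $\delta$-balls, giving $S_P(T_1,n,\delta,\delta)\leq h<a_n(1/t)/N$. You instead invoke the Rokhlin--Halmos periodic approximation: a periodic $T_1$ with $T_1^q=I$ has at most $r^q$ distinct $P$-names of length $n$ \emph{uniformly in $n$}, so every periodic map lies in $\underline{G}(N,t,M,P)$ once one waits for $a_n(1/t)>Nr^q$, and density of periodic maps finishes the argument. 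Your route is more elementary --- it needs no Hamming-comparison lemma and no control of how $P$ sits relative to the column levels, since the trivial bound $r^q$ on occurring names suffices --- at the cost of leaving the class of aperiodic (let alone ergodic or rank-one) transformations; the paper's perturbation stays inside the rank-ones, which keeps the construction in the same spirit as the explicit cutting-and-stacking examples built elsewhere in the paper. Both arguments establish the stated density; just make sure, as you do, that the witness $n$ is taken larger than $N$ and that the closing-up defect of the tower (measure about $1/q+\gamma$) is absorbed into the prescribed weak neighborhood.
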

\begin{proof}
Let $P = \{ p_1, p_2, \ldots , p_r \}$ be the partition into $r$ elements 
for $r \in \natural$.  We can discard elements with zero measure. 
Since rank-ones are dense in $\mathcal{M}$, let $T_0 \in \mathcal{M}$ be a rank-one transformation 
and let $\epsilon > 0$.  
Let $\delta < \frac{1}{M}$ and define $\eta = \min{\{ \delta^2 , \epsilon \}}$.  
Choose a rank-one column $\mathcal{C} = \{ I_0, I_1, \ldots , I_{h-1} \}$ for $T_0$ such that 
\begin{enumerate}
\item $\mu ( \bigcup_{i=0}^{h-1} I_i ) > 1 - \frac{\eta}{2}$, 
\item $h > \frac{2}{\eta}$, 
\item there exist disjoint collections $J_i$ such that 
$\mu ( p_i \triangle \bigcup_{j\in J_i} I_j ) < \frac{\eta}{4r} \mu (p_i)$.  
\end{enumerate}
Let $q_i = \bigcup_{j\in J_i} I_j$ and $Q = \{ q_1, q_2, \ldots , q_r \}$.  
Now we show how to construct a transformation $T_1 \in \underline{G}(N,t,M,P)$.  
Since $T_1$ will differ by $T_0$ inside the top level or outside the column, then 
$T_1$ will be within $\epsilon$ of $T_0$.  
Choose $k_1 \in \natural$ such that $k_1 h > N$ and for $n = k_1 h$, 
\[
a_n(\frac{1}{t}) > N h . 
\]
Choose $k_2 \in \natural$ such that $k_2 > \frac{2}{\eta}$.  
Cut column $\mathcal{C}$ into $k_1k_2$ columns of equal width and stack from left to right.  
Call this column $\mathcal{C}^{\prime}$ which has height $k_1 k_2 h$.  
Let $A_1 = \{ x: d ( \vec{P}_n ( x ) , \vec{Q}_n ( x ) ) < \frac{\delta}{2} \}$.  
By Lemma \ref{ham5}, since 
\[
\sum_{i=1}^{r} \mu ( p_i \triangle q_i ) < \frac{\eta}{4} \leq \frac{\delta^2}{4}, 
\]
then 
\[
\mu \big( A_1 \big) > 1 - \frac{\delta}{2} . 
\]
Let $A_2$ be the union of levels in $\mathcal{C}^{\prime}$ except for the top $n$ levels. 
For $x \in A_2$, $\vec{Q}_n(x)$ gives at most $h$ distinct vectors.  
Also, $\delta$-balls centered at these words will cover 
$A_1 \cap A_2$.  
Precisely, 
\[
\bigcup_{x \in A_2} B_{\delta}^{T_1,Q} \big( \vec{Q}_n(x) \big) \supseteq A_1 \cap A_2 . 
\]
Since $\mu(A_1 \cap A_2) > 1 - \delta$, then 
$S_{P}(T_1,n,\delta,\delta) \leq h < \frac{a_n({1}/{t})}{N}$.  
Therefore, we are done.
\end{proof}

\begin{thm}
Suppose $a_n(t) \in \real$ is such that for $t > s$, $a_n(t) \geq a_n(s)$ and for $t >0$, 
$\lim_{n\to \infty} a_n(t) = \infty$.  
There exists a dense $G_{\delta}$ subset $G \subset \mathcal{M}$ such that for each 
$T \in G$, the lower slow entropy of $T$ is zero with respect to $a_n(t)$. 
\end{thm}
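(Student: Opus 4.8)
The plan is to run a Baire category argument on the open dense sets $\underline{G}(N,t,M,P)$ furnished by Propositions \ref{open-zero-prop} and \ref{dense-zero-prop}, and then to remove the dependence on a single partition. First I would fix a countable family $\{P_k\}_{k\in\natural}$ of finite ordered partitions that is dense, in the metric $D(\cdot,\cdot)$, among finite partitions of each fixed cardinality $r$; such a family exists because $(X,\B,\mu)$ is standard and $\B$ is countably generated modulo null sets. I then set
\[
G = \bigcap_{k\in\natural}\ \bigcap_{t\in\natural}\ \bigcap_{M\in\natural}\ \bigcap_{N\in\natural} \underline{G}(N,t,M,P_k),
\]
and intersect further with the dense $G_\delta$ class of ergodic transformations, so that Lemma \ref{ham5} will be available. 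Since each set in the intersection is open (Proposition \ref{open-zero-prop}) and dense (Proposition \ref{dense-zero-prop}), and the index set is countable, the Baire category theorem gives that $G$ is a dense $G_\delta$ subset of $\mathcal{M}$.

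Next I would show that for every $T\in G$ the lower slow entropy with respect to each $P_k$ vanishes. Fix $k$, fix $\varepsilon,\delta>0$, and fix $s>0$; choose $t\in\natural$ with $1/t\le s$ and $M\in\natural$ with $1/M<\min(\varepsilon,\delta)$. Because $T\in\underline{G}(N,t,M,P_k)$ for every $N$, for each $N$ there are $n_N>N$ and $\delta_N<1/M$ with $S_{P_k}(T,n_N,\delta_N,\delta_N)<a_{n_N}(1/t)/N$. Since $S_{P_k}(T,n,\cdot,\cdot)$ is nonincreasing in each of its last two arguments and $\delta_N<\min(\varepsilon,\delta)$, I get $S_{P_k}(T,n_N,\varepsilon,\delta)\le S_{P_k}(T,n_N,\delta_N,\delta_N)$; and since $a_n(s)\ge a_n(1/t)$ by monotonicity in the parameter, this yields
\[
\frac{S_{P_k}(T,n_N,\varepsilon,\delta)}{a_{n_N}(s)} \le \frac{S_{P_k}(T,n_N,\delta_N,\delta_N)}{a_{n_N}(1/t)} < \frac{1}{N}.
\]
As $n_N\to\infty$, this subsequence forces $\liminf_{n} S_{P_k}(T,n,\varepsilon,\delta)/a_n(s)=0$ for every $\varepsilon,\delta,s$, so the defining set of the lower slow entropy is empty at every scale and hence $\mbox{s-}\underline{\mathcal{H}}_a^{\mu}(T,P_k)=0$.

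Finally I would promote this from the dense family to an arbitrary finite partition $P$, so that the supremum over all partitions is still zero. Given $P$ with $r$ atoms and any $\varepsilon,\delta>0$, I pick $\eta>0$ with $\eta<\min(\varepsilon,\delta)$ and choose $P_k$ in the dense family with $r$ atoms and $D(P,P_k)<\eta^2$. By Lemma \ref{ham5} the set where $d(\vec{P}_n(x),\vec{(P_k)}_n(x))<\eta$ has measure $>1-\eta$, so any family of $(\varepsilon-\eta)$-balls covering $1-(\delta-\eta)$ of the $P_k$-names also covers, with the same centers enlarged to radius $\varepsilon$, more than $1-\delta$ of the $P$-names; hence $S_P(T,n,\varepsilon,\delta)\le S_{P_k}(T,n,\varepsilon-\eta,\delta-\eta)$ for all $n$. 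Dividing by $a_n(s)$ and passing to $\liminf$ transfers the vanishing from $P_k$ to $P$, so $\mbox{s-}\underline{\mathcal{H}}_a^{\mu}(T,P)=0$ and therefore $\mbox{s-}\underline{\mathcal{H}}_a^{\mu}(T)=\sup_P \mbox{s-}\underline{\mathcal{H}}_a^{\mu}(T,P)=0$. The main obstacle I expect is precisely this last reduction: the slow entropy is a supremum over \emph{all} partitions, whereas the countable Baire intersection controls only the family $\{P_k\}$, so the step that actually closes the argument is the uniform comparison of covering numbers across nearby partitions via Lemma \ref{ham5}, carried out with careful bookkeeping of the $\varepsilon,\delta,\eta$ perturbations.
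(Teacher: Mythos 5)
Your proposal is correct and follows essentially the same route as the paper: a Baire category intersection of the sets $\underline{G}(N,t,M,P_k)$ over a countable dense family of partitions, followed by a transfer to arbitrary partitions via the covering-number comparison of Lemma \ref{ham5}. The only differences are cosmetic (your explicit intersection with the ergodic class so Lemma \ref{ham5} applies, and your $\varepsilon-\eta$, $\delta-\eta$ bookkeeping versus the paper's $1/(3M)$ versus $1/M$ choice).
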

\begin{proof}
Let $P_L$ be a sequence of nontrivial measurable partitions such that for each $k \in \natural$, 
the collection $\{ P_L : L \in \natural \}$ is dense in the class of all 
measurable partitions with $k$ nontrivial elements.  
By Proposition \ref{dense-zero-prop}, for $N,t,M,L \in \natural$, 
the set $\underline{G}(N,t,M,P_L)$ is dense, and also open by Proposition \ref{open-zero-prop}.  
Thus, 
\[
G = \bigcap_{L=1}^{\infty} \bigcap_{t=1}^{\infty} \bigcap_{M=1}^{\infty} \bigcap_{N=1}^{\infty} 
\underline{G}(N,t,M,P_L) 
\]
is a dense $G_{\delta}$.  
Given a nontrivial measurable partition $P$ and $t, M \in \natural$, 
choose $L \in \natural$ such that 
\[
D(P,P_L) < \frac{1}{9M^2} . 
\]
For $T \in \bigcap_{N=1}^{\infty} \underline{G}(N,t,3M,P_L)$, 
\[
\liminf_{n\to \infty} \frac{ S_{P_L}(T,n,\frac{1}{3M},\frac{1}{3M}) }{a_n(\frac{1}{t})} = 0 . 
\]
By Lemma \ref{ham5}, 
$S_{P}(T,n,\frac{1}{M},\frac{1}{M}) \leq S_{P_L}(T,n,\frac{1}{3M},\frac{1}{3M})$.  
Therefore, for $T\in G$, the lower slow entropy is zero with respect to $a_n(t)$. 
\end{proof}

\begin{cor}
In the weak topology, the generic transformation in $\mathcal{M}$ is rigid, weak mixing, 
rank-one and has zero polynomial lower slow entropy. 
\end{cor}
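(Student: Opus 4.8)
The plan is to assemble the corollary from four separate genericity statements by way of the Baire category theorem. The space $\mathcal{M}$ with the weak topology is Polish (completely metrizable and separable), hence a Baire space, so any countable intersection of dense $G_{\delta}$ subsets is again a dense $G_{\delta}$, and in particular nonempty. Reading ``generic'' as ``lying in a dense $G_{\delta}$ set,'' it therefore suffices to exhibit each of the four asserted properties---rigidity, weak mixing, rank-one, and zero polynomial lower slow entropy---as holding on a dense $G_{\delta}$ subset of $\mathcal{M}$, and then to intersect the four sets.

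First I would invoke the three classical category results. The weak mixing transformations form a dense $G_{\delta}$ (Halmos, Rokhlin), the rigid transformations form a dense $G_{\delta}$, and the rank-one transformations form a dense $G_{\delta}$ in the weak topology. These are standard and I would simply cite them; indeed the density of rank-ones is precisely what was used at the start of the proof of Proposition \ref{dense-zero-prop}, and the genericity of weak mixing together with rigidity is the fact already invoked in the introduction to answer Question 6.1.2. Write these three dense $G_{\delta}$ sets as $G_{\mathrm{wm}}$, $G_{\mathrm{rig}}$, and $G_{\mathrm{r1}}$.

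Next I would extract the fourth property from the immediately preceding theorem by specializing the rate. Take $a_n(t) = n^t$. This is positive, monotone in $t$ (for $n \geq 1$ one has $n^t \geq n^s$ whenever $t > s$), and satisfies $\lim_{n\to\infty} n^t = \infty$ for every $t > 0$, so the hypotheses of the theorem hold. The theorem then produces a dense $G_{\delta}$ set $G_{\mathrm{se}}$ on which the lower slow entropy with respect to $n^t$---that is, the polynomial lower slow entropy---vanishes. Setting
\[
\mathcal{G} = G_{\mathrm{wm}} \cap G_{\mathrm{rig}} \cap G_{\mathrm{r1}} \cap G_{\mathrm{se}},
\]
the Baire category theorem makes $\mathcal{G}$ a dense $G_{\delta}$, so every $T \in \mathcal{G}$ is simultaneously weak mixing, rigid, rank-one, and has zero polynomial lower slow entropy, which is the claim.

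I do not anticipate any genuine analytic obstacle, since the corollary is essentially a repackaging of the preceding theorem together with textbook genericity facts. The only points that require care are the trivial verification that $a_n(t) = n^t$ meets the monotonicity and divergence hypotheses, and the correct citation of the classical statement that rank-one is a dense $G_{\delta}$, which is the least elementary of the three cited facts; accordingly I expect this to be the part of the write-up deserving a precise reference, while the remainder is immediate.
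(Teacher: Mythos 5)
Your proof is correct and matches the paper's intent: the corollary is stated without proof precisely because it follows by specializing the preceding theorem to $a_n(t)=n^t$ and intersecting the resulting dense $G_{\delta}$ with the classical dense $G_{\delta}$ sets of rigid, weak mixing, and rank-one transformations. Your verification of the hypotheses for $n^t$ and your note that the genericity of rank-one is the one fact deserving a precise citation are both apt.
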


\section{Generic class with infinite upper slow entropy}
The transformations in this section are constructed 
by including alternating stages of cutting and stacking.  
Suppose $T$ is representated by a single Rokhlin column 
$\mathcal{C}$ of height $h$. 

\subsection{Two approximately independent words}
\label{two-ind-section}
Cut column $\mathcal{C}$ into two subcolumns $\mathcal{C}_1$ and $\mathcal{C}_2$ of equal width.  
Given $k \in \natural$, cut $\mathcal{C}_1$ into $k$ subcolumns of equal width, 
stack from left to right, and place $k$ spacers on top.  Cut $\mathcal{C}_2$ 
into $k$ subcolumns of equal width and place a single spacer on top 
of each subcolumn, then stack from left to right. 
After this stage, there are two columns of height $k ( h + 1 )$.  

\subsection{Independent cutting and stacking}
\label{ind-cut-stack-section}
Independent cutting and stacking is defined similar to \cite{shields1973cutting}.  
As opposed to \cite{shields1973cutting}, 
here it is not necessary to use columns of different heights, since weak mixing 
is generic and we are establishing a generic class of transformations.  
Also, in section \ref{rigid-inf-slow-ent-0}, we include a weak mixing stage 
which allows all columns to have the same height and facilitates counting of codewords.  
Given two columns $\mathcal{C}_1$ and $\mathcal{C}_2$ of height $h$, and $s \in \natural$, 
independent cutting and stacking the columns $s$ times produces $2^{2^{s}}$ columns, 
each with height $2^{s}h$.  

\subsection{Infinite upper slow entropy}
Let $P = \{ p_1, p_2 \}$ be a nontrivial measurable 2-set partition.  
We construct a dense $G_{\delta}$ for the case where $\mu (p_1) = \frac{1}{2}$, 
although a similar procedure will handle the more general case where $0 < \mu(p_1) < 1$.  
Let $a_n(t)$ be a sequence of real numbers with subexponential growth.  
In particular, for every $t, \beta > 1$, $\lim_{n\to \infty} \frac{a_n(t)}{\beta^n} = 0$.  
For $M,N,t \in \natural$, define 
\begin{align}
\overline{G}(M,N,t,P) = \{ T \in \mathcal{M} : \exists n > N\ \mbox{and}\ \delta > \frac{1}{M}
\ \mbox{such that}\ S_{P}(T, n, \delta, \delta) > a_n(t) \} . 
\end{align}

\begin{prop}
\label{open-infinity-prop}
For $M,N,t \in \natural$, the set $\overline{G}(M,N,t,P)$ is open in the weak topology on $\mathcal{M}$. 
\end{prop}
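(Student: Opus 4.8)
The plan is to mirror the structure of the proof of Proposition \ref{open-zero-prop}, but with every inequality reversed: here I must show that a small perturbation \emph{keeps the covering number large} rather than small. First I would fix $T_0 \in \overline{G}(M,N,t,P)$ together with a witnessing $n > N$ and $\delta_0 > \frac{1}{M}$ satisfying $S_P(T_0,n,\delta_0,\delta_0) > a_n(t)$, and set $P_n = \bigvee_{i=0}^{n-1} T_0^{-i} P$. The crucial choice is to pick a slightly \emph{smaller} radius $\delta_1$ with $\frac{1}{M} < \delta_1 < \delta_0$ and a perturbation budget $\alpha = (\delta_0 - \delta_1)/2 > 0$, so that $\delta_1 + \alpha < \delta_0$. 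This $\delta_1 > \frac{1}{M}$ will be the witness exhibiting membership of nearby transformations in $\overline{G}(M,N,t,P)$.

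Next, exactly as in Proposition \ref{open-zero-prop}, I would use the weak topology to select an open neighborhood $U$ of $T_0$ such that for every $T_1 \in U$, $0 \le i < n$, and $p \in P_n$, one has $\mu ( T_1 T_0^i p \triangle T_0^{i+1} p ) \le ( \alpha/n^2 ) \mu(p)$. The same inductive estimate (\ref{eqn2}) and the set $E = \bigcup_{p \in P_n} E_p$ then produce a set $E$ with $\mu(E) > 1 - \alpha$ on which the $P$-names of $T_0$ and $T_1$ coincide. This construction is routine and identical to the earlier argument.

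The heart of the matter is the covering comparison in the harder direction. Fix any $T_1 \in U$ and let $V = \{ v_1, \dots, v_k \}$ be an optimal family of $\delta_1$-ball centers for $T_1$, so $k = S_P(T_1,n,\delta_1,\delta_1)$ and $\mu \big( \bigcup_i B_{\delta_1}^{T_1,P}(v_i) \big) \ge 1 - \delta_1$. Since $T_0$ and $T_1$ share the same $P$-name on $E$, the Hamming balls satisfy $B_{\delta_1}^{T_1,P}(v_i) \cap E = B_{\delta_1}^{T_0,P}(v_i) \cap E$, and discarding $E^c$ of measure $< \alpha$ shows the same $k$ centers cover at least $1 - \delta_1 - \alpha$ under $T_0$. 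Hence $S_P(T_0,n,\delta_1,\delta_1+\alpha) \le k$. I would then invoke monotonicity of $S_P$ in its last two arguments: because $\delta_1 \le \delta_0$ (smaller radius) and $\delta_1 + \alpha \le \delta_0$ (smaller mass threshold), both moves can only raise the covering number, giving $S_P(T_0,n,\delta_1,\delta_1+\alpha) \ge S_P(T_0,n,\delta_0,\delta_0) > a_n(t)$. Therefore $k > a_n(t)$, and since $n > N$ and $\delta_1 > \frac{1}{M}$, we conclude $T_1 \in \overline{G}(M,N,t,P)$, so $U \subseteq \overline{G}(M,N,t,P)$ and the set is open.

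The main obstacle is purely the bookkeeping of parameters: one must choose $\delta_1$ and $\alpha$ so that both the ball radius and the coverage threshold shift in the direction that can only increase $S_P$, while reserving enough slack $\alpha$ to absorb the measure $\mu(E^c) < \alpha$ lost to the perturbation, and while keeping $\delta_1 > \frac{1}{M}$ so the witness remains admissible. No idea beyond the monotonicity of $S_P$ and the name-matching neighborhood from Proposition \ref{open-zero-prop} is required.
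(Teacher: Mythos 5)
Your proposal is correct and follows essentially the same route as the paper: the same choice of $\delta_1$ with $\frac{1}{M}<\delta_1<\delta_0$ and $\alpha=(\delta_0-\delta_1)/2$, the same name-matching neighborhood $E$ with $\mu(E)>1-\alpha$, and the same transfer of an optimal $\delta_1$-cover for $T_1$ to a cover for $T_0$ whose cardinality is then forced to exceed $a_n(t)$ by the witnessing inequality $S_P(T_0,n,\delta_0,\delta_0)>a_n(t)$. The only cosmetic difference is that you route the comparison through $S_P(T_0,n,\delta_1,\delta_1+\alpha)$ and monotonicity of $S_P$ in both parameters, whereas the paper directly enlarges the transferred balls to radius $\delta_0$ and notes the resulting union has measure $>1-\delta_1-\alpha>1-\delta_0$; both are the same estimate.
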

\begin{proof}
Let $T_0 \in \overline{G}(M,N,t,P)$ and $n>N$, $\delta_0 > \frac{1}{M}$ be such that 
$S_{P}(T, n, \delta_0, \delta_0) > a_n(t)$. 
Let $P_n = \bigvee_{i=0}^{n-1} T_0^{-i} P$.  The elements of $P_n$ of positive measure 
correspond to the various $P$-names of length $n$.  For almost every $x,y \in X$, 
$x$ and $y$ have the same $P$-name under $T_0$, if and only if 
$x, y \in p$ for some $p\in P_n$.  
Choose $\delta_1 \in \real$ such that $\frac{1}{M} < \delta_1 < \delta_0$.  
Let $\alpha = {\big( \delta_0 - \delta_1 \big)}/{2}$.  
In the weak topology, 
choose an open set $U$ containing $T_0$ such that for $T_1 \in U$, 
$0 \leq i < n$, and $p \in P_n$, 
\begin{align}
\mu ( T_1 T_0^i p \triangle T_0^{i+1}p ) &\leq \big( \frac{\alpha}{n^2} \big) \mu ( p ) \label{eqn3} .
\end{align}
We will prove inductively in $j$ for $p \in P_n$ that 
\begin{align}
\mu ( T_0^{j} p \triangle T_1^{j} p ) &\leq \big( \frac{j\alpha}{n^2} \big) \mu ( p ) . \label{eqn4}
\end{align}
The case $j=1$ follows directly from (\ref{eqn3}): 
\begin{align*}
\mu ( T_0 p \triangle T_1 p ) &\leq \big( \frac{\alpha}{n^2} \big) \mu (p) . 
\end{align*}
Also, the case $j=0$ is trivial. 
Suppose equation (\ref{eqn4}) holds for $j = i$.  Below shows it holds for $j = i+1$: 
\begin{align*}
\mu ( T_0^{i+1} p \triangle T_1^{i+1} p ) &\leq \mu ( T_0^{i+1} p \triangle T_1 T_0^{i} p ) + 
\mu ( T_1 T_0^{i} p \triangle T_1^{i+1} p ) \\ 
&= \mu ( T_0^{i+1} p \triangle T_1 T_0^{i} p ) + \mu ( T_0^{i} p \triangle T_1^{i} p ) \\ 
&\leq  \big( \frac{\alpha}{n^2} \big) \mu ( p ) + \big( \frac{i \alpha}{n^2} \big) \mu ( p ) \\ 
&= \big( \frac{(i+1)\alpha}{n^2} \big) \mu ( p ) . 
\end{align*}
For $p \in P_n$, let 
\[
E_p = \bigcap_{i=0}^{n-1} T_1^{-i} T_0^{i} p . 
\]
Thus, 
\begin{align}
\mu \big( E_p \big) &\geq \mu (p) - \sum_{i=1}^{n-1} \mu (p \triangle T_1^{-i}T_0^{i} p) \\ 
&= \mu (p) - \sum_{i=1}^{n-1} \mu (T_1^{i} p \triangle T_0^{i} p) \\
&> (1 - \alpha) \mu (p) . 
\end{align}
Hence, if $E = \bigcup_{p \in P_n} E_p$, $\mu(E) > 1 - \alpha$.  
Each $x \in E$ has the same $P$-name under $T_1$ and $T_0$.  
Suppose $V\subseteq 2^{\{0,1\}^n}$ is such that $A_1 = \bigcup_{v\in V} B^{T_1}_{\delta_1}(v)$ 
satisfies $\mu ( A_1 ) \geq 1 - \delta_1$. 
Let $A_0 = \bigcup_{v\in V} B^{T_0}_{\delta_0}(v)$ and 
$A_0^{\prime} = \bigcup_{v\in V} B^{T_0}_{\delta_1}(v)$.  
Since $\mu ( A_1 \triangle A_0^{\prime} ) \leq \mu(E^c) < \alpha$, then 
\[
\mu (A_0) \geq \mu (A_0^{\prime}) > 1 - \delta_1 - \alpha > 1 - \delta_0 . 
\]
Therefore, since $\mu (A_0) \geq 1 - \delta_0$, then $\mbox{card}(V) > a_n(t)$ and we are done. 
\end{proof}

Our density result follows. 
\begin{prop}
\label{dense-infinity-prop}
For sufficiently large $M$, and all $N,t \in \natural$, 
the set $\overline{G}(M,N,t,P)$ is dense in the weak topology on $\mathcal{M}$. 
\end{prop}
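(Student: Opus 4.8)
The plan is to prove density exactly as in Proposition~\ref{dense-zero-prop}: show that an arbitrary rank-one transformation $T_0$ can be approximated in the weak topology by some $T_1 \in \overline{G}(M,N,t,P)$, and then invoke that rank-one transformations are dense in $\mathcal{M}$. So I fix such a $T_0$, represented by a single column $\mathcal{C}$ of height $h$, and fix $\epsilon > 0$. As before I take $h$ large and choose a partition $Q = \{q_1, q_2\}$ whose atoms are unions of levels of $\mathcal{C}$, with $D(P,Q) < \eta^2$ for a small $\eta$ to be fixed. All modifications below occur at the tops of columns and on spacers, on a set of total measure $O(1/h)$, so taking $h$ large keeps $T_1$ within $\epsilon$ of $T_0$. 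The construction of $T_1$ proceeds in the two phases of Sections~\ref{two-ind-section} and \ref{ind-cut-stack-section}: first the two-approximately-independent-words stage produces two columns of equal height $\ell$ whose $Q$-names $w_1,w_2$ satisfy $d := d(w_1,w_2) \ge d_0$ for a definite $d_0 > 0$ (indeed $d \approx \tfrac12$); then $s$ rounds of independent cutting and stacking produce $2^{2^s}$ columns of height $n = 2^s\ell$, whose $Q$-names realize all $2^m$ block-sequences in $\{w_1,w_2\}^m$ with $m = 2^s$. I then continue cutting and stacking to exhaust $X$ and obtain a genuine $T_1 \in \mathcal{M}$; the later stages do not disturb the length-$n$ codewords already present and add only $O(1/h)$ to the discrepancy with $T_0$.

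The heart of the argument is the lower bound on $S_P(T_1,n,\delta,\delta)$. Fixing $\delta$ just above $1/M$ and $\eta$ small, Lemma~\ref{ham5} (applied through the block structure, using $D(P,Q) < \eta^2$) shows that the $P$-name and $Q$-name of a point agree to within Hamming distance $\eta$ off a set of small measure, which is exactly the hypothesis of Lemma~\ref{ham2}. Taking $\phi$ to be the idealized $Q$-codeword and $\psi$ the $P$-name, Lemma~\ref{ham2} bounds the minimum number of $\delta$-balls covering $1-\delta$ of the $P$-names from below by
\[
(1-\delta-\eta)\,2^{\,m\left(1-\mathcal{H}\!\left(\frac{2(\delta+\eta)}{d}+\frac{1}{m}\right)\right)},
\]
where the geometric separation of $w_1,w_2$ feeding this estimate comes from Lemma~\ref{ham1}. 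Provided $\tfrac{2(\delta+\eta)}{d}+\tfrac1m < \tfrac12$, the exponent is a positive constant $c$, so this bound equals $(1-\delta-\eta)\,2^{cm} = (1-\delta-\eta)\,\beta^{\,n}$ with $\beta = 2^{c/\ell} > 1$. This is where ``sufficiently large $M$'' enters: since I want some $\delta > 1/M$ with $\tfrac{2(\delta+\eta)}{d}+\tfrac1m < \tfrac12$, I need $1/M < d_0/4$, i.e. $M > 4/d_0$ (so $M > 8$ when $d \approx \tfrac12$).

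Finally, because $a_n(t)$ is subexponential, $\lim_{n\to\infty} a_n(t)/\beta^{\,n} = 0$ for the now-fixed $\beta > 1$. Hence for all sufficiently large $s$ the height $n = 2^s\ell$ satisfies both $n > N$ and $(1-\delta-\eta)\beta^{\,n} > a_n(t)$, giving $S_P(T_1,n,\delta,\delta) > a_n(t)$ with $\delta > 1/M$, that is, $T_1 \in \overline{G}(M,N,t,P)$. Choosing $s$ this large and $h$ large enough for the weak estimate completes the approximation.

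The step I expect to be the main obstacle is making the counting in the second paragraph rigorous over a $1-\delta$ fraction of $X$ rather than only on the column bases. A point at an interior level reads a cyclically shifted concatenation of the blocks $w_1,w_2$, so one must verify that the image of $x \mapsto \vec{Q}_n(x)$ still has exponential cardinality with the required Hamming separation and is equidistributed enough to feed Lemma~\ref{ham2} with $b$ close to $1$. This is precisely the bookkeeping that the uniform column height (and, where needed, the weak-mixing stage noted in Section~\ref{ind-cut-stack-section}) is designed to control, while the separation of distinct block-sequences is guaranteed directly by Lemma~\ref{ham1}.
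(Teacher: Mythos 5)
Your overall architecture matches the paper's: approximate a rank-one $T_0$ by a $T_1$ built from the two-word stage of Section~\ref{two-ind-section} followed by independent cutting and stacking, then beat the subexponential rate $a_n(t)$ by an exponential ball-counting bound from Lemmas \ref{ham1} and \ref{ham2}. But there is a genuine gap at the single most delicate point: you assert that the two generating words $w_1,w_2$ satisfy $d(w_1,w_2)\ge d_0$ ``for a definite $d_0>0$ (indeed $d\approx\tfrac12$)'' with no argument. This is false without an additional input. Writing $W$ for the $Q$-name of the original column of height $h$, the two words are essentially $W^k\sigma^k$ and $(W\sigma)^k$, so comparing them position by position amounts to comparing $W$ against its cyclic shifts by $0,1,\dots,k-1$. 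If $W$ has long monochromatic runs (e.g.\ $q$ is the union of the top half of the levels, so $W\approx 0^{h/2}1^{h/2}$), the shift-$s$ self-distance of $W$ is $O(s/h)$ and one gets $d(w_1,w_2)=O(k/h)$, which is arbitrarily small when $h\gg k$. The paper rules this out by first invoking the mean ergodic theorem for $T_0$ and $I_p$ to fix a scale $K$ (inequality (\ref{eqn5})), then setting $k=2(K+2)$; the resulting window-averaged decorrelation $\frac1K\sum_{i=0}^{K-1}\mu(T^{j+i}q\cap q^c)\gtrsim \mu(q)\mu(q^c)$ for \emph{every} shift $j$ is exactly what forces $d(w_1,w_2)\ge d_0$ (the paper gets $d_0\ge 1/100$, not $\approx 1/2$). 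Since ``sufficiently large $M$'' is calibrated against $d_0$, this omission is not cosmetic: without it neither the existence of $d_0$ nor the threshold on $M$ is established.

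A second, smaller issue is one you flag yourself but do not resolve: Lemma \ref{ham2} counts balls among codewords read from a fixed alignment, whereas $S_P(T_1,n,\delta,\delta)$ concerns a cover of $1-\delta$ of the whole space, where points at interior levels read shifted concatenations. The paper closes this by a pigeonhole over the levels $b_{jn+i_0}$ of the tall columns: it finds one offset $i_0$ and one index $j_0$ for which the candidate cover still captures a $1-O(\delta)$ fraction of $b_{j_0n+i_0}$ \emph{and} the $P$-versus-$Q$ discrepancy restricted to the corresponding sub-tower is small enough to apply Lemma \ref{ham4}, after which Lemma \ref{ham2} applies on that single level set (where all block boundaries are aligned). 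Your proposal should carry out this reduction explicitly rather than deferring it, since it is where the hypotheses of Lemma \ref{ham2} (the near-uniform measure $b/2^m$ on block-sequences and the $\eta$-closeness of $\psi$ to $\phi$) are actually verified.
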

\begin{proof}
It will be sufficient to consider $M \geq 1000$.  
This will allow us to choose $\delta < \frac{1}{900}$.  Also, 
$d$ in Lemma \ref{ham2} can be chosen $d \geq \frac{1}{100}$.  
The value $m$ in Lemma \ref{ham2} will equal $2^r$ for some $r$.  
It will not be difficult to choose $r$ such that $2^r > 100$.  
Also, let $0 < \eta < 10^{-5}$.  
Thus, 
\[
\mathcal{H} \big( \frac{2(\delta + \eta)}{d} + 2^{-r} \big) < 
\mathcal{H} \big( \frac{2}{9} + \frac{1}{500} + \frac{1}{100} \big) 
< \mathcal{H} \big( \frac{1}{4} \big) < \frac{7}{8} . 
\]

Since rank-ones are dense in $\mathcal{M}$, let $T_0 \in \mathcal{M}$ be a rank-one transformation. 
Let $\epsilon > 0$.  We will show there exists $T_1$ within $\epsilon$ of $T_0$ 
in the weak topology.  It is sufficient to construct $T_1$ such that 
$\mu ( \{ x : T_1(x) \neq T_0(x) \} ) < \epsilon$.  We can reset $\epsilon = \min{\{ \epsilon, \eta \}}$. 
Let $N,t \in \natural$ and $p=p_1$.  Since $T_0$ is ergodic, we can choose 
$K\in \natural$ such that for $k \geq K$, 
\begin{align}
\int_X | \frac{1}{k} \sum_{i=0}^{k-1} I_{p}(T_0^i x) - \mu (p) | d\mu &< \frac{\eta^2}{6} . 
\label{eqn5}
\end{align}
Choose a rank-one column $\mathcal{C} = \{ I_0, I_1, \ldots , I_h \}$ 
for $T_0$ such that 
\begin{enumerate}
\item $\mu ( \bigcup_{i=0}^{h-2} I_i ) > 1 - \epsilon$, 
\item $h > N$, 
\item there exists a collection $J$ such that 
$\mu ( p \triangle \bigcup_{j\in J} I_j ) < \frac{\eta^2}{6} \mu (p)$.  
\end{enumerate}
Let $q = \bigcup_{j\in J} I_j$ and $Q = \{ q, q^c \}$.  Since $\mu(p) = {1}/{2}$, 
we can assume ${1}/{3} < \mu(q) < {2}/{3}$.  
Thus,
\begin{align*}
\int_X | \frac{1}{k} \sum_{i=0}^{k-1} I_q ( T_0^i x ) - \mu (q) | d\mu &\leq 
\int_X | \frac{1}{k} \sum_{i=0}^{k-1} \big( I_q ( T_0^i x ) - I_p(T_0^i x) \big) | d\mu \\ 
&+ \int_X | \frac{1}{k} \sum_{i=0}^{k-1} I_p ( T_0^i x ) - \mu (p) | d\mu +
| \mu (p) - \mu (q) | \\ 
&< \mu ( q \triangle p ) + \frac{\eta^2}{6} + \frac{\eta^2}{6} < \frac{\eta^2}{2} . 
\end{align*}

The transformation will apply independent cutting and stacking using 
two words who have a significant distance.  The words are pure with respect 
to a subset of intervals $I_j, j\in J$.  Since we are counting balls using 
the partition $P$, we will apply Lemma \ref{ham2} to covers of $P$-names. 

Now we show how to construct a transformation $T_1 \in \overline{G}(M,N,t,P)$ such that 
$d_w(T_0,T_1) < \epsilon$. 
Cut column $\mathcal{C}_1 = \{ I_0, I_1, \ldots , I_{h-1} \}$ into 2 subcolumns of equal width, 
labeled as $\mathcal{C}_{1,0}$ and $\mathcal{C}_{1,1}$.  For $k = 2(K+2)$ as above, cut each subcolumn 
into $k$ subcolumns of equal width.  For $\mathcal{C}_{1,0}$, stack from left to right, 
and then place $k$ spacers on top.  For $\mathcal{C}_{1,1}$, place a spacer on top, and then 
stack from left to right.  The auxillary level $I_h$ may be used to add these 
spacers.  Call these columns $\mathcal{C}_{2,0}$ and $\mathcal{C}_{2,1}$ respectively.  

Before proceeding with the construction, let us demonstrate the usefulness 
of these two blocks.  Suppose a copy of $\mathcal{C}_{2,0}$ is shifted by $j$ and 
we measure the overlap with two catenated unshifted copies of $\mathcal{C}_{2,1}$.  
The shifted copy of $\mathcal{C}_{2,0}$ will have an overlap of at least 
$(K+2)$ copies of $\mathcal{C}_{1,0}$ with one of the copies of $\mathcal{C}_{2,1}$.  
If $j \leq (K+2)(h+1)$, then consider the overlap with the first copy, 
otherwise consider the larger overlap with the second copy. 
Assume $j \leq (K+2)(h+1)$.  The other case is handled similarly. 
For $h$ sufficiently large, there will be an overlap of at least 
$K$ full copies of $\mathcal{C}_{1,0}$ with a copy of $\mathcal{C}_{1,1}$ 
which we call $\mathcal{C}^{\prime}$.  
Also, the copies will be distributed equally with shifts 
$j^{\prime}, j^{\prime}+1, j^{\prime}+2, \ldots , j^{\prime}+K-1$ for some $j^{\prime}$.   
Since $T_1(x) = T_0(x)$ for $x \in \mathcal{C}^{\prime}$, if $\alpha = \mu(q)$, 
\begin{align}
\mu \big( T_1^j q \cap q^c \cap \mathcal{C}^{\prime} \big) 
&\geq \frac{\mu(\mathcal{C}^{\prime})}{k} \sum_{i=0}^{K-1} \mu ( T_1^{j+i}q \cap q^c ) \\ 
&= \frac{\mu(\mathcal{C}^{\prime})K}{k} \frac{1}{K} \sum_{i=0}^{K-1} \mu ( T_1^{j+i}q \cap q^c ) \\ 
&\geq \frac{\mu(\mathcal{C}^{\prime})}{6} \Big( \frac{1}{K} 
\sum_{i=0}^{K-1} \mu ( T_1^{j+i}q \cap q^c ) \Big) \\ 
&\geq \frac{\mu(\mathcal{C}^{\prime})}{6} \Big( \frac{1}{K} \sum_{i=0}^{K-1} 
\mu ( T_1^{j+i}q \cap q^c ) - \alpha(1-\alpha) \Big) + 
\frac{\mu(\mathcal{C}^{\prime})\alpha(1-\alpha)}{6} \\ 
&> \mu(\mathcal{C}^{\prime}) \Big( \frac{\alpha(1-\alpha)}{6} - \frac{\eta^2}{6} \Big) 
> \mu(\mathcal{C}^{\prime}) \Big( \frac{1}{54} - \frac{\eta^2}{6} \Big) 
> \frac{\mu(\mathcal{C}^{\prime})}{100} .  
\end{align}

Let $\ell = k(h+1)$ and $\beta = 2^{\frac{1}{8k(h+1)}}$.  
Choose $r \in \natural$ such that for $n = 2^r k(h+1)$, 
\[
\beta^{n} > 2 a_{n} ( t ) . 
\]
Choose $s \in \natural$, $s > r$ such that $2^{r-s} < \eta^2$.  

By inequality (\ref{eqn5}), distance between 
shifts of the $P$-word formed from $\mathcal{C}_{1,1}$ and the $P$-word formed from $\mathcal{C}_{1,0}$ 
will be bounded away from 0.  Both columns have length $k(h+1)$.  
Remark: we cut into $k$, so that any two shifted blocks will have at least 
$K$ sub-blocks that overlap. 

Apply independent cutting and stacking to both columns, $s$ number of times. 
This will cause the number of $P$-names of length $k(h+1)$ to grow exponentially in $s$. 
(Note, a Chacon-2 similar stage could be inserted to guarantee weak mixing, 
but this is not needed, since weak mixing is generic.)  
There will be $2^{2^s}$ columns, each of height $2^s k(h+1)$.  
We will consider $P$-names of length $n$ where $n$ is large compared to $k(h+1)$, 
but small relative to $2^s k(h+1)$.  
For most points $y\in X$, we get a $P$-name by taking $x$ in the base of a column 
and some $j$ such that $y = T_1^j x$.  We can get a $P$-name for $y$ by forming 
the vector $v = \langle I_{p}(T^{j+i} x) \rangle_{i=0}^{n-1}$.  
Let $b$ be the set containing the bases of all columns of height $2^s k(h+1)$.  
Define $b_j = T_1^{j} b$ and 
$\vec{b_j} = \{ \langle I_{p}(T_1^{j+i} x) \rangle_{i=0}^{n-1} | x\in b, 0\leq i < n \}$.  
Suppose $S = S_P(T_1,n,\delta,\delta)$ is the minimal number of $\delta$-balls such that 
the union of measure is at least $1-\delta$.  
Let $v_1, v_2, \ldots , v_S$ be codewords of length $n$ such that 
$B_{\delta}^{T_1,P}(v_i)$ cover at least measure $1 - \delta$.  
Let $B = \bigcup_{i=1}^{S} B_{\delta}^{T_1,P}(v_i)$.  
Thus, 
\begin{align}
\mu \Big( B \cap \big( \bigcup_{i=0}^{n-1} \bigcup_{j=0}^{2^{s-r}-2} b_{jn+i_0} \big) \Big) 
&> 1 - \delta - \epsilon - 2^{r-s} . 
\end{align}
There exists $i_0$ such that 
\begin{align}
\mu \Big( B \cap \big( \bigcup_{j=0}^{2^{s-r}-2} b_{jn+i_0} \big) \Big) 
&> \frac{1}{n} \big( 1 - \delta - \epsilon - 2^{r-s} \big) 
\end{align}
Let $D = \bigcup_{j=0}^{2^{s-r}-2} b_{jn+i_0}$.  Then 
\begin{align}
\mu \Big( B \cap D \Big) 
&> \frac{\mu(D)}{n\mu(D)} \big( 1 - \delta - \epsilon - 2^{r-s} \big) \\ 
&\geq \frac{(1-\delta-\epsilon-2^{r-s})}{(1-\epsilon-2^{r-s})} \mu(D) \\ 
&= \big( 1 - \frac{\delta}{1-\epsilon-2^{r-s}} \big) \mu (D) . 
\end{align}
Hence, 
\[
\mu ( D \setminus B ) < \big( \frac{\delta}{1-\epsilon-2^{r-s}} \big) \mu (D). 
\]
Let 
\[
J_1 = \{ j : 0\leq j \leq 2^{s-r}-2, \mu ( b_{jn+i_0} \setminus B ) < 
\frac{2\delta}{1 - \epsilon - 2^{r-s}} \mu ( b_{jn+i_0}) \} . 
\]
Let $\hat{b}_{j} = \bigcup_{i=0}^{n-1} b_j$.  
Note 
\[
\sum_{j=0}^{2^{s-r}-2} \mu ( q \cap \cup_{j=0}^{2^{s-r}-2} \hat{b}_{jn+i_0} 
\triangle p \cap \cup_{j=0}^{2^{s-r}-2} \hat{b}_{jn+i_0} ) < \frac{\eta^2}{6} . 
\]
Let 
\[
J_2 = \{ j : 0\leq j \leq 2^{s-r}-2, \mu ( q \cap \hat{b}_{jn+i_0} 
\triangle p \cap \hat{b}_{jn+i_0} ) < \frac{\eta^2}{3(2^{s-r}-1)} \} . 
\]
Both $|J_1| > \frac{1}{2} \big( 2^{s-r} - 1 \big)$ and 
$|J_2| > \frac{1}{2} \big( 2^{s-r} - 1 \big)$.  
Thus, there exists $j_0 \in J_1 \cap J_2$.  

Since $\mu ( q \cap \hat{b}_{j_0n+i_0} \triangle p \cap \hat{b}_{jn+i_0} ) 
< \eta^2 \mu ( \hat{b}_{j_0 n + i_0} )$, by Lemma \ref{ham4}, 
\[
\mu \big( \{ x \in b_{j_0n+i_0} : d( \vec{Q}_n(x) , \vec{P}_n(x) ) < \eta \} \big) > 1 - \eta . 
\]
Therefore, by Lemma \ref{ham2}, the number of Hamming balls $S$ satisfies: 
\begin{align}
S \geq & \big( 1 - \frac{2\delta}{1 - \epsilon - 2^{r-s}} - \eta \big) 
2^{ 2^r (1 - \mathcal{H}( \frac{ 2( \delta + \eta ) }{ d } + 2^{-r} )) } \\ 
\geq &  \big( 1 - \frac{2\delta}{1 - \epsilon - 2^{r-s}} - \eta \big) \beta^{ n } \\ 
> & 2 \big( 1 - \frac{2\delta}{1 - \epsilon - 2^{r-s}} - \eta \big) a_n(t) 
> a_n(t) . 
\end{align}


Therefore, $S_{P}(T_1,n,\delta,\delta) > a_n(t)$ and $T_1 \in \overline{G}(M,N,t,P)$. 
\end{proof}

\noindent 
Remark: It is clear from the proof of Proposition \ref{dense-infinity-prop} that given 
$\gamma > 0$, there exists $M(\gamma)$ such that $\overline{G}(M,N,t,P)$ 
is dense for $M\geq M(\gamma)$ and 2-set partition $P$ such that $\mathcal{H}(P) \geq \gamma$.  

 \begin{thm}\label{residual-inf-thm}
 Suppose $a_n(t)$ is such that for $t,\beta > 1$, $\lim_{n\to \infty} \frac{a_n(t)}{\beta^n} = 0$.  
 There exists a dense $G_{\delta}$ subset $G \subset \mathcal{M}$ such that for each 
 $T \in G$, the upper slow entropy of $T$ is infinite with respect to $a_n(t)$. 
 \end{thm}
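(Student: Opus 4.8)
The plan is to mirror the structure of the proof of the zero lower slow entropy theorem: combine the openness (Proposition \ref{open-infinity-prop}) and density (Proposition \ref{dense-infinity-prop}) of the sets $\overline{G}(M,N,t,P)$ into a single dense $G_{\delta}$ via the Baire category theorem (the weak topology on $\mathcal{M}$ is Polish), and then read off the infinite upper slow entropy for every $T$ in the intersection. The key simplification relative to the lower entropy case is that upper slow entropy is a supremum over partitions, so it suffices to force the infinite value through a single well chosen $2$-set partition rather than a dense family of partitions.

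First I would fix a $2$-set partition $P = \{ p_1, p_2 \}$ with $\mu(p_1) = \frac{1}{2}$ (so $\mathcal{H}(P) = 1 > 0$) and choose $M = M_0$ large enough (for instance the threshold $M_0 \geq 1000$ appearing in Proposition \ref{dense-infinity-prop}, or $M(1)$ from the remark following it) that $\overline{G}(M_0, N, t, P)$ is dense for every $N, t \in \natural$. Each such set is open by Proposition \ref{open-infinity-prop}, so
\[
G = \bigcap_{t=1}^{\infty} \bigcap_{N=1}^{\infty} \overline{G}(M_0, N, t, P)
\]
is a countable intersection of open dense sets, hence a dense $G_{\delta}$.

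Next I would extract the quantitative consequence for $T \in G$. The crucial monotonicity is that $S_P(T,n,\delta,\delta)$ is nonincreasing in $\delta$, since increasing $\delta$ relaxes both the ball radius and the mass to be covered. Hence from the defining property of $\overline{G}(M_0,N,t,P)$, namely an $n > N$ and a $\delta > \frac{1}{M_0}$ with $S_P(T,n,\delta,\delta) > a_n(t)$, I obtain $S_P(T,n,\frac{1}{M_0},\frac{1}{M_0}) > a_n(t)$ at this same $n$. Since $T$ lies in the intersection over all $N$, this inequality holds for infinitely many $n$, so $\limsup_{n\to\infty} S_P(T,n,\frac{1}{M_0},\frac{1}{M_0})/a_n(t) \geq 1 > 0$, that is, $t \in \overline{\mathcal{G}}(\frac{1}{M_0}, \frac{1}{M_0}, P)$. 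As this holds for every $t \in \natural$, we conclude $\sup \overline{\mathcal{G}}(\frac{1}{M_0},\frac{1}{M_0},P) = \infty$.

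Finally I would convert this into infinite upper slow entropy, and this last step is the one I expect to be the main obstacle. The definition uses the double limit $\lim_{\delta\to 0}\lim_{\varepsilon\to 0}$, whereas density of $\overline{G}$ is only available for a fixed, not too small $\delta = \frac{1}{M_0}$, so one cannot directly let $\delta \to 0$ while retaining density. The resolution is that $\mbox{s-}\overline{\mathcal{H}}(\varepsilon,\delta,P) = \sup \overline{\mathcal{G}}(\varepsilon,\delta,P)$ is nondecreasing as $\varepsilon, \delta$ decrease to $0$, because shrinking $\varepsilon$ or $\delta$ only increases each $S_P$ and hence enlarges the set $\overline{\mathcal{G}}$. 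Thus $\mbox{s-}\overline{\mathcal{H}}(\varepsilon,\delta,P) \geq \mbox{s-}\overline{\mathcal{H}}(\frac{1}{M_0},\frac{1}{M_0},P) = \infty$ for all $\varepsilon, \delta \leq \frac{1}{M_0}$, so the double limit equals $\infty$ and no genuine limit in $\delta$ toward $0$ is required. Therefore $\mbox{s-}\overline{\mathcal{H}}_{a}^{\mu}(T,P) = \infty$, whence $\mbox{s-}\overline{\mathcal{H}}_{a}^{\mu}(T) = \sup_{P'} \mbox{s-}\overline{\mathcal{H}}_{a}^{\mu}(T,P') = \infty$ for every $T \in G$, as required. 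Since the generic transformation is rigid and weak mixing, this dense $G_{\delta}$ also answers Question 6.1.2 in the affirmative.
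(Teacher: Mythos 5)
Your proposal is correct and follows essentially the same route as the paper: build $G = \bigcap_{t}\bigcap_{N}\overline{G}(M,N,t,P)$ for a single fixed $2$-set partition and a single sufficiently large $M$, invoke Propositions \ref{open-infinity-prop} and \ref{dense-infinity-prop} with Baire category, and read off $\limsup_n S_P(T,n,\frac{1}{M},\frac{1}{M})/a_n(t) \geq 1$ for every $t$. The monotonicity observations you add (of $S_P$ in $\delta$, and of $\mbox{s-}\overline{\mathcal{H}}(\varepsilon,\delta,P)$ as $\varepsilon,\delta \to 0$) are exactly the steps the paper leaves implicit in its final sentence, so they are a welcome elaboration rather than a departure.
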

 \begin{proof}
 By Proposition \ref{dense-infinity-prop}, for $M$ sufficiently large, $\overline{G}(M,N,t,P)$ is dense 
 for all $N, t \in \natural$.  Also, $\overline{G}(M,N,t,P)$ is open 
by Proposition \ref{open-infinity-prop}.  
 Thus, 
 \[
 G = \bigcap_{t=1}^{\infty} \bigcap_{N=1}^{\infty} \overline{G}(M,N,t,P) 
 \]
 is a dense $G_{\delta}$.  Fix $t \in \natural$.  
 For $T \in \bigcap_{N=1}^{\infty} \overline{G}(M,N,t,P)$, 
 \[
 \limsup_{n\to \infty} \frac{ S_P(T,n,\frac{1}{M},\frac{1}{M}) }{a_n(t)} \geq 1 . 
 \]
 Therefore, the upper slow entropy is $\infty$ with respect to $a_n(t)$. 
\end{proof}

\begin{cor}
In the weak topology, the generic transformation in $\mathcal{M}$ is rigid, weak mixing, 
rank-one and has infinite polynomial upper slow entropy. 
\end{cor}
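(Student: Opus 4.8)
The plan is to deduce this corollary from Theorem \ref{residual-inf-thm} by intersecting with the classical genericity statements for the weak topology. First I would specialize the rate to $a_n(t) = n^t$. Since $\lim_{n\to\infty} n^t/\beta^n = 0$ for every $t,\beta > 1$, this rate is subexponential, so Theorem \ref{residual-inf-thm} applies verbatim and produces a dense $G_\delta$ set $G_0 \subset \mathcal{M}$ each of whose elements has upper slow entropy infinite with respect to $n^t$; by the definition of slow entropy this is precisely what ``infinite polynomial upper slow entropy'' means.

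Next I would invoke the standard facts that, in the weak topology, the weakly mixing transformations, the rigid transformations, and the rank-one transformations each form a dense $G_\delta$ subset of $\mathcal{M}$; denote these sets $W$, $R$, and $K$. The space $\mathcal{M}$ with the weak topology is Polish, hence Baire, so a countable intersection of dense $G_\delta$ sets is again a dense $G_\delta$. Therefore $G = G_0 \cap W \cap R \cap K$ is a dense $G_\delta$, in particular residual and nonempty, and every $T \in G$ is simultaneously rigid, weak mixing, rank one, and of infinite polynomial upper slow entropy, which is the assertion.

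I would also flag one conceptual point to forestall an apparent tension. The transformations produced explicitly in the density argument of Proposition \ref{dense-infinity-prop} are built by independent cutting and stacking, and so need not themselves be rank one. This is harmless: Proposition \ref{dense-infinity-prop} is used only to establish \emph{density} of $\overline{G}(M,N,t,P)$, while rank-one-ness of the generic transformation is supplied separately by intersecting with the residual class $K$. The corollary asserts a property of the generic point of the residual set $G$, not of the particular witnesses constructed in the density step.

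The main obstacle is not in the combinatorial estimates, which are already carried by Theorem \ref{residual-inf-thm}, but in invoking and combining the three classical genericity statements correctly, of which the residuality of the rank-one class is the least elementary. I would therefore take care to verify that each of $W$, $R$, and $K$ is genuinely a dense $G_\delta$ in exactly the weak topology used here, and that $\mathcal{M}$ is Baire in that topology, so that the Baire category argument legitimately closes the proof.
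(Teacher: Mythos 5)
Your proposal is correct and is essentially the argument the paper intends: the corollary is stated without a separate proof precisely because it follows from Theorem \ref{residual-inf-thm} applied to the subexponential rate $a_n(t)=n^t$, intersected with the classical dense $G_\delta$ classes of rigid, weak mixing, and rank-one transformations via the Baire category theorem. Your remark that the witnesses built in Proposition \ref{dense-infinity-prop} need not themselves be rank-one, and that this is irrelevant to the category argument, is an accurate and worthwhile clarification.
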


The following corollary is a strengthening of Theorem \ref{residual-inf-thm}.  
\begin{cor}
\label{gen-cor}
Given a rate function $b = b_n(t)$, let 
\begin{align}
\mathcal{G}_b = \{ T \in \mathcal{M}:  \mbox{s-} \overline{\mathcal{H}}_{b}^{\mu} (T,P) > 0 
\ \mbox{for every nontrivial finite measurable}\ P \} . \label{slow-res-class}
\end{align}
If $b_n(t)$ is subexponential, then $\mathcal{G}_b$ contains a dense $G_{\delta}$ class 
in $\mathcal{M}$.  
\end{cor}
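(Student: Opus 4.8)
The plan is to upgrade Theorem \ref{residual-inf-thm}, which handles a single fixed partition, to a statement that is uniform over all partitions. I would combine three ingredients: a monotonicity reduction to two-set partitions, the uniformity of the threshold in the Remark following Proposition \ref{dense-infinity-prop}, and a partition-approximation step built on Lemma \ref{ham5}.

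First I would record the monotonicity of slow entropy under coarsening. If $Q$ is a coarsening of $P$ (each atom of $Q$ a union of atoms of $P$), then the coordinatewise symbol map $\pi$ satisfies $\vec{Q}_n(x)=\pi(\vec{P}_n(x))$ and is $1$-Lipschitz for the Hamming metric, so any $\varepsilon$-ball cover of the $P$-names pushes forward to an $\varepsilon$-ball cover of the $Q$-names of the same cardinality and mass. Hence $S_Q(T,n,\varepsilon,\delta)\le S_P(T,n,\varepsilon,\delta)$ for all $n,\varepsilon,\delta$, and therefore $\mbox{s-}\overline{\mathcal{H}}_b^\mu(T,Q)\le\mbox{s-}\overline{\mathcal{H}}_b^\mu(T,P)$. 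Since every nontrivial finite partition $P$ admits a nontrivial two-set coarsening $Q=\{a,a^c\}$ (take any atom $a$ with $0<\mu(a)<1$), it suffices to produce a dense $G_\delta$ on which $\mbox{s-}\overline{\mathcal{H}}_b^\mu(T,Q)>0$ for every nontrivial two-set partition $Q$.

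Next I would build the residual set. For each $j\in\natural$ put $\gamma_j=1/j$ and let $M_j=M(\gamma_j)$ be the threshold furnished by the Remark after Proposition \ref{dense-infinity-prop}. Fix a countable family $\{R_{j,L}:L\in\natural\}$ of two-set partitions with $\mathcal{H}(R_{j,L})\ge\gamma_j$ that is $D$-dense among all two-set partitions of entropy at least $\gamma_j$ (such a family exists, the measure algebra being separable). By the Remark each $\overline{G}(M_j,N,t,R_{j,L})$ is dense, and by Proposition \ref{open-infinity-prop} it is open, so
\[
G \;=\; \bigcap_{j,L,N,t\in\natural}\overline{G}(M_j,N,t,R_{j,L})
\]
is a dense $G_\delta$. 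Intersecting with the generic weak-mixing class, which I also denote $G$, keeps it a dense $G_\delta$ on which every transformation is ergodic, so that Lemma \ref{ham5} may be invoked. For $T\in G$ and a nontrivial two-set $Q$ with $\mathcal{H}(Q)=\gamma>0$, I choose $j$ with $\gamma_j<\gamma$ and, by density, $R_{j,L}$ with $D(Q,R_{j,L})<(1/(3M_j))^2$. Applying Lemma \ref{ham5} to the pair $(Q,R_{j,L})$ together with the Hamming triangle inequality --- a point whose $Q$- and $R_{j,L}$-names differ by less than $1/(3M_j)$ turns any $(1/(3M_j))$-ball cover of the $Q$-names into a $(1/M_j)$-ball cover of the $R_{j,L}$-names with the same centers --- yields $S_{R_{j,L}}(T,n,1/M_j,1/M_j)\le S_Q(T,n,1/(3M_j),1/(3M_j))$ for every $n$. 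Because $T\in\bigcap_N\overline{G}(M_j,N,t,R_{j,L})$ for each $t$, the left-hand side exceeds $b_n(t)$ for infinitely many $n$, whence $\limsup_n S_Q(T,n,1/(3M_j),1/(3M_j))/b_n(t)\ge 1$ for every $t$. As in Theorem \ref{residual-inf-thm}, this forces $\mbox{s-}\overline{\mathcal{H}}_b^\mu(T,Q)$ to be infinite, hence positive, and by the monotonicity of the first paragraph $\mbox{s-}\overline{\mathcal{H}}_b^\mu(T,P)>0$ for the original $P$; that is, $G\subseteq\mathcal{G}_b$.

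The main obstacle is the universal quantifier over $P$: partitions form an uncountable family, whereas a $G_\delta$ is cut out by only countably many open conditions. What rescues a countable scheme is the uniformity of the threshold $M(\gamma)$ in the Remark, which lets one value $M_j$ serve an entire entropy level $\{\mathcal{H}\ge\gamma_j\}$ at once, together with the reduction to two-set partitions. The delicate point is the comparison of covering numbers in the direction opposite to its use in the zero lower slow entropy theorem: there Lemma \ref{ham5} bounded $S_P$ \emph{above} by $S_{P_L}$, whereas here the same estimate must be read as a \emph{lower} bound on $S_Q$, and one must track that the radius inflation from $1/(3M_j)$ to $1/M_j$ and the loss of mass stay within the prescribed tolerances.
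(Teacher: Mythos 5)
Your proposal is correct and follows essentially the same route as the paper: reduce to two-set partitions, intersect the open dense sets $\overline{G}(M,N,t,\cdot)$ over a countable $D$-dense family of two-set partitions using the uniform threshold $M(\gamma)$ from the Remark, and transfer the covering-number lower bound to an arbitrary partition via Lemma \ref{ham5} with the radius tripling $1/(3M)\to 1/M$. Your stratification by entropy levels $\gamma_j=1/j$ is only an organizational variant of the paper's intersection over $M\ge M_L$, and you additionally make explicit two points the paper leaves implicit (the coarsening monotonicity justifying the reduction to two-set partitions, and the ergodicity hypothesis needed for Lemma \ref{ham5}).
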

\begin{proof}
It is sufficient to prove this corollary for 2-set partitions.  
Let $P_L$ be a sequence of nontrivial measurable 2-set partitions such that 
the collection $\{ P_L : L \in \natural \}$ is dense in the class of all 
measurable 2-set partitions.  
By the proofs of Propositions \ref{open-infinity-prop} and \ref{dense-infinity-prop}, 
then for $N,t,L \in \natural$ and $M_L$ sufficiently large,  
the set $G(M,N,t,P_L)$ is open and dense in the weak topology 
for $M \geq M_L$.  
Thus, 
\[
G = \bigcap_{L=1}^{\infty} \bigcap_{M=M_L}^{\infty} \bigcap_{t=1}^{\infty} \bigcap_{N=1}^{\infty} 
\overline{G}(M,N,t,P_L) 
\]
is a dense $G_{\delta}$.  
Also, $M_L$ may be set equal to $M(\gamma)$ where 
$\gamma = \mathcal{H}(P_L)$.  Actually, choose $M \geq M(\frac{\gamma}{2})$ 
such that if 
\[
D(P,Q) < \frac{1}{9M^2}, 
\]
then $\mathcal{H}(Q) > \mathcal{H}(P) - \frac{\gamma}{2}$.  
Choose $L_0 \in \natural$ such that $D(P,P_{L_0}) < \frac{1}{9M^2}$.  
Let $t > 0$.  
For $T \in \bigcap_{N=1}^{\infty} \overline{G}(M,N,t,P_{L_0})$, 
\[
\limsup_{n\to \infty} \frac{ S_{P_{L_0}}(T,n,\frac{1}{M},\frac{1}{M}) }{b_n(t)} \geq 1 . 
\]
By Lemma \ref{ham5}, 
$S_{P}(T,n,\frac{1}{3M},\frac{1}{3M}) \geq S_{P_{L_0}}(T,n,\frac{1}{M},\frac{1}{M})$.  
Therefore, for $T\in G$, 
$\mbox{s-} \overline{\mathcal{H}}_{b}^{\mu} (T,P) > 0$ 
which implies $G \subseteq \mathcal{G}_b$.  
\end{proof}

\section{Infinite lower slow entropy rigid transformations}
\label{rigid-inf-slow-ent-0}
All transformations are constructed on $[0,1]$ with Lebesgue measure.  
The transformations with infinite lower slow entropy will be infinite rank transformations 
defined using three different types of stages.  These stages will be repeated 
in sequence ad infinitum.  If all stages are labeled $\mathcal{S}_i$ for $i=0,1,2,\ldots$, then 
the stages break down as follows: 
\begin{enumerate}
\item $\mathcal{S}_{3i}$ will be an independent cutting and stacking stage; 
\item $\mathcal{S}_{3i+1}$ will be a weak mixing stage (similar to Chacon-2); 
\item $\mathcal{S}_{3i+2}$ will be a rigidity stage. 
\end{enumerate} 
The transformation will be initialized with two columns $\mathcal{C}_{0,1}$ and $\mathcal{C}_{0,2}$ 
each containing a single interval. Let $P$ be the 2-set partition containing 
each of $\mathcal{C}_{0,1}$ and $\mathcal{C}_{0,2}$.  As the transformation is defined, we will add 
spacers infinitely often.  Spacers will be unioned with the second set, so 
in general, the partition $P = \{ C_{0,1}, X\setminus C_{0,1} \}$. 

\subsection{Independent cutting and stacking stage}
\label{ics}
To define the independent cutting and stacking stage, we use a sequence $s_k \in \natural$ 
for $n=0,1,\ldots$ such that $s_k \to \infty$.  
For stage $\mathcal{S}_0$, independent cut and stack $s_0$ times beginning with columns 
$\mathcal{C}_{0,1}$ and $\mathcal{C}_{0,2}$.  The number of columns is squared at each cut and stack stage, 
so after $s_0$ substages, there are $2^{2^{s_0}}$ columns each having height $2^{s_0}$. 

For the general stage $\mathcal{S}_{3k}$, suppose 
there are $\ell$ columns each having height $h$ at the start of the stage. 
If we independent cut and stack $s_k$ times, then there will be 
\[
2^{2^{s_k}} \ell 
\]
columns each having height $2^{s_k} h$. 

\subsection{Weak mixing stage}
\label{wkmix-stage}
For the general weak mixing stage $\mathcal{S}_{3k+1}$, suppose there are $\ell$ columns of height $h$.  
Cut each column into 2 subcolumns of equal width, add a single spacer on the right subcolumn, 
and stack the right subcolumn on top of the left subcolumn.  
Thus, there will be $\ell$ columns of height $2h+1$. 

Suppose $f$ is an eigenfunction with eigenvalue $\lambda$.  Thus, $f(Tx) = \lambda f(x)$. 
There will be a weak mixing stage with refined columns such that $f$ is 
nearly constant on most intervals.  Let $I$ be one such interval.  
Suppose $x, y \in I$ such that 
$T^h x \in I$ and $T^{h+1}y \in I$ and $f$ is nearly constant on each of these four values.  
Since the following stage is a rigidity stage, it is not difficult to show 
points $x$ and $y$ exist with this property. 
Then 
\[
f( T^h x ) = \lambda^h f(x) \approx f( T^{h+1} y ) = \lambda^{h+1} f ( y ) . 
\]
Since $f(x) \approx f(y)$, then $\lambda^h \approx \lambda^{h+1}$. 
Hence, $\lambda \approx 1$.  In the limit, this shows that $\lambda = 1$ is the only eigenvalue, 
and since $T$ will be ergodic, $f$ is constant and $T$ is weak mixing. 
This is the same argument used in the original proof of Chacon that 
the Chacon-2 transformation is weak mixing \cite{Chacon}. 

\subsection{Rigidity stage}
\label{rigid-stage}
A sequence $r_k \in \natural$ is used to control rigidity.  Suppose at stage 
$\mathcal{S}_{3k+2}$, there are $\ell$ columns of height $h$.  Cut each column into $r_k$ 
subcolumns of equal width, and stack from left to right to obtain $\ell$ columns 
of height $r_k h$. 
Thus, for any set $A$ that is a union of intervals from the $\ell$ columms, 
then
\[
\mu ( T^h A \cap A ) > \Big( \frac{r_k - 1}{r_k} \Big) \mu (A) . 
\]

\subsection{Infinite lower slow entropy} \label{rigid-inf-slow-ent}
Our family $\mathcal{F}$ of transformations are parameterized by sequences 
$r_k \to \infty$ and $s_k \in \natural$.  
Initialize $T \in \mathcal{F}$ using two columns of height 1. 
Independent cut and stack $s_0$ times.  This produces 
$2^{2^{s_0}}$ columns of height $2^{s_0}$.  
Let $h_1 = 2^{s_0}$ be the heights of these columns. 
Cut each column into two subcolumns of equal width, add a single spacer 
on the rightmost subcolumn and stack the right subcolumn on the left subcolumn. 
Then cut each of these columns of height $2h_1 + 1$ into $r_1$ subcolumns 
of equal width and stack from left to right. 
Independent cut and stack these $2^{2^{s_0}}$ columns of height 
$r_1 ( 2h_1 + 1 )$ to form $2^{2^{s_0+s_1}}$ columns of height 
$h_2 = 2^{s_1} r_1 ( 2h_1 + 1 )$.  
In general, $h_k = 2^{s_{k-1}} r_{k-1} ( 2h_{k-1} + 1 )$.  
Also, let 
\[
\sigma_k = \sum_{i=0}^{k-1} s_i . 
\]
Remark: As long as $s_k > 0$ infinitely often, 
the transformations are ergodic due to the independent 
cutting and stacking stages.  Thus, based on section \ref{wkmix-stage}, 
the transformations will be weak mixing.  Rigidity will follow from a given 
sequence $r_k \to \infty$.  The proof that the lower slow entropy is infinite is given next. 

\begin{thm}
Given any subexponential rate $a_n(t)$, 
there exists a rigid weak mixing system $T \in \mathcal{F}$ 
such that $T$ has infinite lower slow entropy with respect to $a_n(t)$. 
\end{thm}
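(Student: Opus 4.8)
The plan is to choose the parameters $s_k$ and $r_k$ defining $T\in\mathcal{F}$ in dependence on the given rate, and then to bound the covering numbers $S_P(T,n,\delta,\delta)$ from below for the fixed two-set partition $P=\{\mathcal{C}_{0,1},X\setminus\mathcal{C}_{0,1}\}$, uniformly in $n$. Weak mixing and rigidity come for free: the independent cutting and stacking substages force ergodicity, the Chacon-type stages $\mathcal{S}_{3k+1}$ then give weak mixing as in Section \ref{wkmix-stage}, and $r_k\to\infty$ gives rigidity along the times $2h_{k+1}+1$ as in Section \ref{rigid-stage}. So the whole problem is the lower slow entropy estimate, and since slow entropy is a supremum over partitions it suffices to work with this single $P$.

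The key structural step is a description of the length-$n$ $P$-names. Every level of every column carries a definite symbol (spacers read $0$), so a name is determined by the column containing the point and its phase in that column. After the stage $\mathcal{S}_{3k}$ there are $N_k=2^{2^{\sigma_{k+1}}}$ columns, and because independent cutting and stacking realizes all concatenations, an easy induction shows these columns are indexed by \emph{all} binary information vectors of length $I_k:=2^{\sigma_{k+1}}$, each of equal measure. The weak mixing and rigidity stages only repeat blocks, so at scale $n$ every information bit occupies the same number $\rho(n)$ of coordinates, where $\rho$ is multiplied by $2$ at each weak mixing stage and by $r_k$ at each rigidity stage. Hence the number of information bits seen in a window of length $n$ is $\iota(n)=n/\rho(n)$ up to the negligible spacer coordinates, the map from information vector to name is a normalized-Hamming isometry up to an $O(1/r)$ error, and a volume count in information space --- exactly the mechanism of Lemmas \ref{ham1} and \ref{ham2}, with the phase averaging absorbed into the $o(1)$ --- yields
\[
S_P(T,n,\delta,\delta)\ \geq\ (1-\delta)\,2^{\,\iota(n)\,(1-\mathcal{H}(2\delta)-o(1))},
\]
with $1-\mathcal{H}(2\delta)>0$ for small $\delta$.

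Next I would identify the binding scales. Along a stage $\mathcal{S}_{3k}$ the factor $\rho$ is constant, so $\iota(n)=n/\rho$ grows linearly; along a rigidity stage $\iota(n)=I_k$ is constant while $n$ grows. Thus $\iota$ is smallest relative to $n$ at the ends of the rigidity stages, the critical scales $n_k^\ast:=r_{k+1}(2h_{k+1}+1)$, where $\iota(n_k^\ast)=I_k$ and $n_k^\ast=I_k\rho_k$ with $\rho_k=2^{k+1}\prod_{i=1}^{k+1}r_i$. Since $a_n(t)$ is subexponential we have $\log_2 a_n(t)=o(n)$, so on each intercritical interval the demand $S_P\geq c_t\,a_n(t)$ is tightest at the right endpoint; it therefore suffices to guarantee, for every $t$ and all large $k$,
\[
I_k\,(1-\mathcal{H}(2\delta))\ \geq\ \log_2 a_{n_k^\ast}(t)+\log_2 c_t^{-1}.
\]

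The final step is the parameter choice, and here lies the one genuine tension: rigidity forces $r_k\to\infty$, which inflates $\rho_k$ and dilutes the information density $1/\rho(n)$, while the estimate needs the information count $I_k$ to dominate $\log_2 a_{n_k^\ast}(t)$. I would break this by a diagonal choice. First fix a slowly growing rigidity sequence, say $r_k=k$; this pins down $\rho_k$. Then, with $\rho_k$ fixed, choose $s_k$ --- hence $I_k=2^{s_k}I_{k-1}$ --- so large that $\log_2 a_{I_k\rho_k}(t)\leq\frac{1}{2}I_k(1-\mathcal{H}(2\delta))$ for every $t\leq k$. This is possible precisely because $a$ is subexponential: with $\rho_k$ held fixed, $\log_2 a_{I_k\rho_k}(t)=o(I_k\rho_k)$ while the target is linear in $I_k$, so enlarging $s_k$ eventually wins for the finitely many $t\leq k$ at once. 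As every fixed $t$ is eventually included, this forces $\liminf_n S_P(T,n,\delta,\delta)/a_n(t)>0$ for all $t>0$ and all small $\delta$, which is exactly infinite lower slow entropy with respect to $a_n(t)$. The step I expect to be hardest, and would write most carefully, is the uniform-in-$n$ covering lower bound: one must control the phase averaging over the periodic rigidity blocks so that no small family of Hamming balls covers disproportionately many names, making the $o(1)$ in the exponent genuinely uniform across an entire rigidity stage rather than only at multiples of the period.
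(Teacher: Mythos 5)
Your proposal follows essentially the same route as the paper's proof: the same family $\mathcal{F}$, the same identification of $P$-names with blocks of repeated information bits so that Hamming-ball volume bounds (Lemmas \ref{ham1}--\ref{ham2}) give $S_P(T,n,\delta,\delta)\gtrsim 2^{\iota(n)(1-\mathcal{H}(\cdot))}$ after a pigeonhole over phases, and the same resolution of the $r_k$-versus-$s_k$ tension by fixing the rigidity data first and then choosing $s_k$ large enough against the subexponential rate (the paper's condition $\beta_k^n> k\,a_n(t_k)$ for $n\ge 2^{s_k}$ is exactly your diagonal choice). Your reduction to the critical scales $n_k^{\ast}$ at the ends of the rigidity stages is a slightly cleaner packaging of the paper's two-case analysis over $2h_k\le n<2h_{k+1}$, but the underlying argument is the same.
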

\begin{proof}
Let $\varepsilon \in \real$ such that $0 < \varepsilon < \frac{1}{100}$.  
Suppose $r_k, t_k \in \natural$ such that 
$\lim_{k\to \infty} r_k = \lim_{k\to \infty} t_k =\infty$.  
Let $h_0 = 1$ and $h_1 = 2^{s_0}$.  Recall the formulas for $h_{k+1}$ 
and $\sigma_k$, 
\[
h_{k+1} = 2^{s_k} r_k \big( 2h_k + 1 \big) \ \ \mbox{and}\ \ \sigma_k = \sum_{i=0}^{k-1} s_i . 
\]
We will specify the sequence $s_k$ inductively 
based on $\sigma_{k}, r_k, h_k$ and $r_{k+1}$.   
For sufficiently large $k$, 
\[
2^{2^{\sigma_k} \mathcal{H} ( 2\varepsilon + 2^{-\sigma_k} ) } 
<  2^{2^{\sigma_k} \mathcal{H} ( 3\varepsilon ) } . 
\]
Let 
\[
\alpha_k = \frac{2^{\sigma_{k}} \big( 1-\mathcal{H}(3\varepsilon) \big)}
{2^5 r_{k+1} r_k h_k}  \ \ \mbox{and}\ \ \beta_k = 2^{\alpha_k} . 
\]
Choose $s_k \in \natural$ such that for $n \geq 2^{s_k}$, 
\[
\beta_k^n > k a_n ( t_k ) . 
\]

Let $0 < \delta < \frac{1}{100}$.  Suppose $S \in \natural$ 
and $v_i$, $1 \leq i \leq S$, are such that 
\[
\mu \big( \bigcup_{i=1}^{S} B_{\varepsilon}^{T,P} (v_i) \big) > 1-\delta . 
\]
Suppose a large $n \in \natural$ is chosen.  
There exists $k \in \natural$ such that $2h_k \leq n < 2h_{k+1}$. 
We divide the proof into two cases: 
\begin{enumerate}
\item $2 r_k ( 2h_k + 1 ) \leq n < 2^{s_k+1} r_k ( 2h_k + 1 )$ 
\item $2 h_k \leq n < 2 r_k ( 2h_k + 1 )$ . 
\end{enumerate}
For case (1), 
there exist $\rho \in \natural$ such that $1 \leq \rho \leq s_k$ and 
\[
2^{\rho} r_k ( 2h_k + 1 ) \leq n < 2^{\rho + 1} r_k ( 2h_k + 1 ) . 
\]
Our $T$ is represented as a cutting and stacking construction 
with the number of columns tending to infinity. 
Fix a height $H$ much larger than $n$ and set of columns 
$\mathcal{C}$.  
Pick $H$ such that $\frac{n}{H} < \frac{\eta}{2}$ and 
the complement of columns in $\mathcal{C}$ is less that 
$\frac{\eta}{2}$ for small $\eta$.  
Let $b$ be points in the bottom levels of $\mathcal{C}$.  
Also, let $b_j = T^j b$ for $0\leq j < H - n$.  

Let $B = \bigcup_{i=1}^{S} B_{\varepsilon}^{T,P} (v_i)$.  There exists 
$j < H - n$ such that 
\[
\mu ( B \cap b_j ) > \big( \frac{1 - \delta - \eta}{1 - \eta} \big) 
\mu ( b_j ) 
> ( 1 - 2\delta ) \mu(b_j) \ \ \mbox{for sufficiently small $\eta$}. 
\]
If $\mu ( B_{\varepsilon}^{T,P} (v_i) \cap b_j ) = 0$ for some $i$, 
then remove that Hamming ball from the list.  
Let $R \leq S$ be the number of remaining Hamming balls and rename 
the vectors so that 
$B = \bigcup_{i=1}^{R} B_{\varepsilon}^{T,P} (v_i)$ satisfies 
$\mu(B\cap b_j) > ( 1 - 2\delta ) \mu (b_j)$.  
Choose $x_i \in B_{\varepsilon}^{T,P} (v_i) \cap b_j$ such that 
$\mu ( \{ x \in B \cap b_j : \vec{P}_n(x) = \vec{P}_n(x_i) \} ) > 0$.  
Let $w_i = \vec{P}_n(x_i)$.  Thus, by the triangle inequality 
of the Hamming distance, 
$B_{2\varepsilon}( w_i ) \supseteq B_{\varepsilon} ( v_i )$ and hence, 
\[
B_{2\varepsilon}^{T,P}( \vec{P}_n(x_i) ) \cap b_j 
\supseteq B_{\varepsilon}^{T,P} (v_i) \cap b_j . 
\]
This previous statement allows us to consider only balls 
with $P$-names as centers by doubling the radius. 
For a.e. point $x \in b_j$, the $h_k$ long blocks, 
called $\mathcal{C}_k$, will align.  Also, lower level blocks 
$h_i$ for $i < k$ will align as well. 

Since all $h_k$ blocks and sub-blocks align for $x \in b_j$, 
we can view this problem as a standard estimate on hamming ball 
sizes for i.i.d. binary sequences.  
The spacers added over time diminish and will have little effect 
on the distributions and hamming distance.  
For repeating blocks to be within $2\varepsilon$ of a word $w_i$, 
it is necessary the sub-blocks to be within approximately 
$2\varepsilon$.  
Note, 
\[
n \geq 2^{\rho} r_k ( 2h_k + 1 ) 
= 2^{\rho} r_k \Big( 2 
\big( 2^{s_{k-1}} r_{k-1} (2h_{k-1} + 1) \big) \Big) > 2^{s_{k-1}} . 
\]
Since we have approximated must but not all variables, 
we can make $R$ at least, 
\[
R \geq \frac{1}{2} \big( 1 - 2\delta \big) 
2^{ 2^{\rho} 2^{\sigma_k} \big( 1 - \mathcal{H}( 3\varepsilon ) \big) } . 
\]
Hence,
\begin{align}
2^{ 2^{\rho} 2^{\sigma_k} \big( 1 - \mathcal{H}( 3\varepsilon ) \big) } 
&= 2^{ 2^{\rho + s_{k-1}} 2^{\sigma_{k-1}} \big( 1 - \mathcal{H}( 3\varepsilon ) \big) } \\ 
&= \Big( 2^{\frac{(1 - \mathcal{H}(3\varepsilon)) 2^{\sigma_{k-1}} }
{32r_k r_{k-1} h_{k-1}}} \Big)^{2^{\rho + s_{k-1}} 32 r_k r_{k-1} h_{k-1}} \\ 
&> \Big( 2^{\alpha_{k-1}} \Big)^{2^{\rho} 8 r_k h_{k}} \\ 
&> \big( 2^{\alpha_{k-1}} \big)^{ 2^{\rho+1} r_k (2h_k + 1)} \\ 
&> \big( \beta_{k-1} \big)^{n} > (k-1) a_n ( t_{k-1} ) . 
\end{align}
This handles case (1).  Case (2) can be handled in a similar fashion, 
with focus on the blocks of length $h_{k-1}$. 
\end{proof}

\section{Slow entropy and the convergence rates of Blume}
This section is devoted to comparing and contrasting the notion of slow entropy 
with the study of entropy convergence rates of Blume \cite{blume2012relation, 
blume2015realizing}.  
In \cite{blume2012relation}, 
Blume proves that the set of all transformations $T$ satisfying the property that
for all nontrivial finite measurable partitions $P$, the entropy 
$\mathcal{H}(\bigvee_{i=0}^{n-1} T^{-i}P)$ of the $n^{th}$ refinements 
converges in the limit superior faster to $\infty$ than a given sublinear rate $a_n$ 
is residual with respect to the weak topology.  

The entropy convergence rates studied by Blume are fundamentally different from 
the slow entropy formulation introduced by Katok and Thouvenot.  
In particular, given a sublinear rate $a_n \to \infty$ and subexponential rate $b_n(t)$, 
we give an outline of a cutting and stacking construction 
which has zero upper slow entropy with respect to $b_n(t)$, but still satisfies 
for every nontrivial finite measurable partition $P$: 
\[
\limsup_{n\to \infty} \frac{1}{a_n} \mathcal{H} 
\big( \bigvee_{i=0}^{n-1} T^{-i}P \big) = \infty . 
\]
We say $a=a_n$ is sublinear, if $\lim_{n\to \infty} a_n = \infty$ and 
$\lim_{n\to \infty} \frac{a_n}{n} = 0$.  
Previously introduced by Blume \cite{blume2012relation}, 
the class $ES(a)$ is defined as:
\[
ES(a) = \{ T \in \mathcal{M} : 
\limsup_{n\to \infty} \frac{1}{a_n} \mathcal{H} \big( \bigvee_{i=0}^{n-1} T^{-i}P \big) > 0 
\ \mbox{for all nontrivial finite}\ P \} . 
\]
Also, we show for any sublinear rate $a = a_n$, then for the subexponential rate 
$b = b_n(t) = 2^{ta_n}$, the following holds:
\footnote{The set $\mathcal{G}_b$ is defined in \ref{slow-res-class}.} 
\[
\mathcal{G}_b \subsetneq ES(a) . 
\]
This demonstrates that the slow entropy formulation provides a stricter measure 
of complexity for a dynamical system than the entropy convergence rate.


\subsection{Infinite entropy convergence rate and zero lower slow entropy}
The following result distinguishes slow entropy from entropy convergence rate. 

\begin{prop}
\label{example-prop}
Given a sublinear rate $a_n$ and any subexponential rate $b=b_n(t)$, 
there exists an ergodic invertible measure preserving transformation $T$ 
such that $T$ has zero upper slow entropy with respect to $b_n(t)$, 
but for any nontrivial finite measurable partition $P$, 
\[
\limsup_{n \to \infty} \frac{1}{a_n} \mathcal{H} 
\big( \bigvee_{i=0}^{n-1} T^{-i} P \big) > 0 . 
\]
\end{prop}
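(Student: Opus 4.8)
The plan is to build \(T\) by cutting and stacking, with a fixed generating \(2\)-set partition \(P=\{p_1,p_2\}\) (all spacers adjoined to \(p_2\), as in Section~\ref{rigid-inf-slow-ent-0}), so that the \(P\)-names of length \(n\) carry on the order of \(a_n\) bits of genuine independence while all of that independence is confined to a common set of coordinates of size \(o(\varepsilon n)\). I fix a rapidly increasing sequence of scales \(n_k\to\infty\) and alternate two operations on the current tower: (i) \emph{spacer padding}, which raises the column height by appending long deterministic runs of the \(p_2\)-symbol (adding length but neither names nor independence, and thereby \emph{diluting} the fraction of informative coordinates), and (ii) a short \emph{independence gadget}, built by applying independent cutting and stacking (Section~\ref{ind-cut-stack-section}) to an auxiliary block of height \(w_k\) and then stacking the resulting \(2^{w_k}\) variants on top of the tower. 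Crucially the gadget is assembled on a \emph{short} block before being attached, so the resulting columns agree on all but a common window of \(w_k\) coordinates; a Chacon-\(2\) substage (Section~\ref{wkmix-stage}) is inserted periodically to guarantee ergodicity (indeed weak mixing). The single design constraint is \(a_{n_k}\le w_k\) together with the requirement that the padding always keeps the height far larger than the cumulative window length, so that within every length-\(n\) name the union of informative coordinates has size \(o(\varepsilon n)\); this is feasible exactly because \(a\) is sublinear (\(a_n=o(n)\)), and one may take \(w_k\approx a_{n_k}\) for the stated conclusion or \(a_{n_k}\ll w_k\ll n_k\) for the sharper \(\limsup=\infty\).

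Granting this, zero upper slow entropy with respect to \(b_n(t)\) follows from clustering. Fix \(\varepsilon,\delta>0\). By construction the informative coordinates of any length-\(n\) generator name form a common set whose density tends to \(0\), so for all large \(n\) every \(P\)-name differs from a fixed reference name on fewer than \(\varepsilon n\) coordinates; hence \(d(\vec P_n(x),\vec P_n(y))<\varepsilon\) for a.e.\ \(x,y\), a single \(\varepsilon\)-ball covers all but \(\delta\), and \(S_P(T,n,\varepsilon,\delta)=O(1)\). For an arbitrary finite partition \(P\) I first approximate it by a partition \(Q\) that is a union of tower levels at a sufficiently deep stage with \(D(P,Q)<\eta^2\); since such \(Q\) is constant on the shared (non-informative) part of each column, \(Q\)-names cluster as above, and Lemma~\ref{ham5} transfers the covering to \(P\) at the cost of \(\eta\) in radius and measure. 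Because \(b_n(t)\to\infty\) for every \(t>0\), we get \(\limsup_n S_P(T,n,\varepsilon,\delta)/b_n(t)=0\) for each \(t\); thus \(\overline{\mathcal G}(\varepsilon,\delta,P)=\emptyset\), \(\mbox{s-}\overline{\mathcal H}^{\mu}_{b}(T,P)=0\), and taking the supremum over \(P\) gives zero upper slow entropy.

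For the entropy convergence rate I use that the gadget at stage \(k\) introduces a genuine product (Bernoulli-type) structure over its \(w_k\) coordinates. For the generator this immediately yields
\[
\mathcal{H}\Big(\bigvee_{i=0}^{n_k-1}T^{-i}P\Big)\ \ge\ c\,w_k\ \ge\ c\,a_{n_k},
\]
so \(\limsup_{n}\frac1{a_n}\mathcal{H}(\bigvee_{i=0}^{n-1}T^{-i}P)\ge c>0\) (and \(=\infty\) in the sharper regime). To pass to an arbitrary nontrivial finite \(P\), I would restrict the refinement to the window \(W_k\) of gadget \(k\) and show that the independence there forces \(\mathcal{H}(\bigvee_{i\in W_k}T^{-i}P)\ge c(P)\,w_k\) with \(c(P)>0\): since \(P\) is nontrivial it separates a set of positive measure from its complement, and the product structure of the gadget makes the coordinates of the restricted name asymptotically independent with a fixed positive per-coordinate information, so their joint entropy grows linearly in \(w_k\). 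Combining with \(w_k\ge a_{n_k}\) gives \(\limsup_n \frac1{a_n}\mathcal{H}(\bigvee_{i=0}^{n-1}T^{-i}P)>0\) for every nontrivial \(P\), i.e.\ \(T\in ES(a)\), while the first part shows \(T\notin\mathcal G_b\).

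The main obstacle is the last step: proving the entropy lower bound \emph{uniformly over all nontrivial partitions \(P\)}. For the generator the bound is built in, but a general \(P\) may align poorly with the \(p_1/p_2\) structure of the gadget or concentrate on the spacer coordinates, so one must quantify how much of the injected independence any nontrivial \(P\) is guaranteed to resolve. This is where a careful Shannon-type estimate on the product structure is needed, and it must be reconciled with the opposing constraint driving the clustering argument — that the very same independence occupy only a vanishing fraction of each name. Balancing these two requirements through the choice of \(w_k\), the padding lengths, and the scales \(n_k\) is the delicate point of the construction.
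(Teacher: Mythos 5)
Your mechanism is genuinely different from the paper's. The paper makes the high-complexity part \emph{small in measure}: at stage $n$ the space splits into a left column of mass $\frac{n}{n+1}$ that is cut-in-half-and-stacked (low complexity) and a right column of mass $\frac{1}{n+1}$ that undergoes independent cutting and stacking; once $\frac{1}{n+1}<\delta$ the covering count $S_P(T,n,\varepsilon,\delta)$ simply ignores the complex part, while Shannon entropy still registers its exponentially many names. You instead make the injected information \emph{small in time density}: all names share a deterministic backbone and differ only on a window of $w_k=o(\varepsilon n)$ coordinates. Both mechanisms exploit the insensitivity of Hamming covers, and yours is viable in principle for the generator partition, but your write-up has a concrete error at the key step. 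From ``the informative coordinates have density $o(\varepsilon n)$'' you conclude that every name is within $\varepsilon$ of a \emph{single} reference name, so $S_P=O(1)$. That does not follow: two points at different levels (phases) of the tower read different \emph{shifts} of the backbone, and shifts of a nonconstant deterministic word are generically far apart in Hamming distance (the backbone has a positive frequency of the $p_1$-symbol, forced by $\mu(p_1)>0$ and ergodicity, so the names are not close to the all-$p_2$ word either). Sparsity of the windows only clusters names with the \emph{same} phase; to bound the number of balls you must separately show the backbones at the various phases cluster into few balls, i.e.\ that the zero-information skeleton has covering numbers growing slower than $b_n(t)$ at \emph{every} scale $n$ (and $b_n(t)$ may grow arbitrarily slowly). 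This is exactly where your ``long deterministic runs of spacers'' hurt you: inserted spacers desynchronize orbits relative to earlier towers, so the period-$H_j$ structure you would need for a one-ball-per-phase count is destroyed unless the padding is arranged much more carefully (compare the paper's Proposition~\ref{dense-zero-prop}, which stacks with \emph{no} intervening spacers precisely to get $S_P\le h$ with $n\gg h$).

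The second gap you yourself flag — the entropy lower bound for an \emph{arbitrary} nontrivial $P$ — is real and is not optional: a general $P$ could be nearly measurable with respect to the backbone and see almost none of the injected independence. The paper resolves this by swapping a subcolumn $U_n$ of the low-complexity part into the complex part at each stage and arranging the $U_i$ to be nearly pairwise independent, so that for any fixed set $p$ of positive measure one gets $|\mu(p\cap U_i)-\mu(p)\mu(U_i)|<\gamma\mu(U_i)$ for most $i$; this guarantees every nontrivial $P$ resolves a definite fraction of the independent structure. Your proposal would need an analogous quantitative device (not just ``a Shannon-type estimate is needed'') before the claim $\mathcal{H}(\bigvee_{i\in W_k}T^{-i}P)\ge c(P)w_k$ can be asserted for all nontrivial $P$. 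As it stands, both the covering upper bound and the partition-uniform entropy lower bound are unproved, so the proposal is an interesting alternative blueprint rather than a proof.
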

\begin{proof}
It is sufficient to prove this for any nontrivial measurable 2-set 
partition $P = \{ p, p^c \}$.  
First we describe the inductive step for constructing $T$.  
Let $\mathcal{C}_1$ and $\mathcal{C}_2$ be columns of height $h_n$.  
Cut $\mathcal{C}_1$ into $2(n+2)$ subcolumns of equal width and 
cut $\mathcal{C}_2$ into 2 subcolumns of equal width.  
Swap the last subcolumn of $\mathcal{C}_1$ with the second 
subcolumn of $\mathcal{C}_2$.  
For the modified $\mathcal{C}_2$, cut into 2 subcolumns of 
equal width.  Cut each of these subcolumns into $k_n$ subcolumns 
of equal width and use spacers to build nearly independent words 
as described in section \ref{two-ind-section}.  
Then apply independent cutting and stacking as described 
in section \ref{ind-cut-stack-section} to the two $\mathcal{C}_2$ subcolumns 
to create $2^{2^{s_n}}$ subcolumns of equal width. 
Each of these subcolumns will have height 
$2^{s_n} k_n (h_n + 1)$.  
Stack these subcolumns to build a single column 
of height $2^{2^{s_n}} 2^{s_n} k_n (h_n + 1)$.  

For the modified $\mathcal{C}_1$ column, cut into $k_n$ subcolumns 
of equal width, stack from left to right, and add $k_n$ spacers on top.  
Then cut this column into 2 subcolumns of equal width, stack the right subcolumn 
on top of the left subcolumn.  Repeat this procedure a total of $2^{2^{s_n}+s_n}$ times.  
After this stage, we have produced two subcolumns of equal height, 
\[
h_{n+1}  = 2^{2^{s_n}} 2^{s_n} k_n (h_n + 1) . 
\]

This process can be initialized by taking any rank-one transformation 
and dividing a column of height $h_1$ into 2 subcolumns of equal width. 
After the first stage, there will be 2 subcolumns, one with width 
$\frac{2}{3}$ and the other with width $\frac{1}{3}$.  
After $n-1$ stages, there will be 2 subcolumns of height $h_{n}$, 
one with width $\frac{n}{n+1}$ and the other with width $\frac{1}{n+1}$.  

For rapidly growing $s_n$, this transformation $T$ will boost the 
entropy on the right column for each stage.  
In particular, given sublinear $a_n$, it is possible to choose $s_n$ 
such that the right portion appears to be a positive entropy transformation, 
but scaled by a factor of $\frac{1}{n+1}$.  
More explicitly, if $X_n$ represents the left column at stage $n$, 
and $U_n \subset X_n$ is the portion that is switched, then 
$U_i$ will be approximately conditionally independent of $U_j$ on $X_n$.  
Thus, by the 2nd Borel-Cantelli lemma, a.e. point will fall in $U_i$ 
for infinitely many $i$.  Moreover, for large $s_n$, near pairwise 
independence of $U_i$ will lead to near pairwise independence 
between any set $p$ and most $U_i$.  In particular, we can choose 
$s_n$ such that given $\gamma > 0$, 
\[
\Gamma_p = \{ i \in \natural : | \mu ( p \cap U_i ) - \mu(p)\mu(U_i) | < \gamma \mu (U_i) \} , 
\]
satisfies 
\[
\sum_{i \in \Gamma_p} \mu ( U_i ) < \infty . 
\]
Since $\lim_{n\to \infty} \mu(X_n) = 1$, $T$ will be ergodic.  This 
will enable the creation of two nearly independent words similar to section \ref{two-ind-section}.  
Thus, independent cutting and stacking raises the slow entropy on this portion (right column) 
of the space.  However, using cut and half and stack on the left column lowers 
the slow entropy on most of the space.  Hence, given $\delta > 0$, 
once the right column mass $\frac{1}{n+1}$ falls below $\delta$, 
then it will be possible to use a sublinear number of $P$-names 
to cover the names produced by the left column.  
The scaled entropy calculation $\frac{1}{a_n} \mathcal{H}(\bigvee_{i=0}^{n-1} T^{-i} P)$ 
will include the rising entropy from the right column of mass $\frac{1}{n+1}$.  

If sublinear $a_n$ is specified and $b=b_n(t)$ such that $\lim_{n\to \infty} b_n(t) = \infty$ 
for all $t>0$, then $s_n$ can be chosen to increase rapidly such that 
for every nontrivial measurable 2-set partition $P$, 
\[
\mbox{s-} \overline{\mathcal{H}}_{b}^{\mu} (T,P) = 0 
\]
and 
\[
\limsup_{n\to \infty} \frac{1}{a_n} \mathcal{H} \big( \bigvee_{i=0}^{n-1} T^{-i} P \big) > 0 . 
\]
Therefore, $T \in ES(a)$, but $T$ has zero upper slow entropy with respect to $b_n(t)$.  
\newline \noindent 
Remark: $b_n(t)$ may be chosen with slow growth and is not necessarily $2^{ta_n}$.  
\end{proof}

\subsection{$\mathcal{G}_{2^{ta_n}} \subsetneq ES(a_n)$}
It is straightforward to show that the rate function $b_n(t) = 2^{ta_n}$ is subexponential if and only if 
$a_n$ is sublinear.  
\begin{prop}
Suppose $a_n$ is sublinear and $b_n(t) = 2^{ta_n}$.  If $T \in \mathcal{G}_b$, then 
for every nontrivial finite measurable partition $P$, 
\[
\limsup_{n\to \infty} \frac{1}{a_n} \mathcal{H} \big( \bigvee_{i=0}^{n-1} T^{-i} P \big) > 0 . 
\]
Thus, $\mathcal{G}_b \subsetneq ES(a)$.  
\end{prop}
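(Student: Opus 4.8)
The plan is to prove the containment $\mathcal{G}_b \subseteq ES(a)$ directly; the strictness will then follow at once from Proposition \ref{example-prop}, which furnishes a transformation that lies in $ES(a)$ but has zero upper slow entropy with respect to $b$, hence lies outside $\mathcal{G}_b$. So I would fix $T \in \mathcal{G}_b$ and an arbitrary nontrivial finite measurable partition $P$, and write $P_n = \bigvee_{i=0}^{n-1} T^{-i} P$. Since $S_P(T,n,\varepsilon,\delta)$ is nonincreasing in both $\varepsilon$ and $\delta$, the quantity $\mbox{s-}\overline{\mathcal{H}}(\varepsilon,\delta,P)$ is monotone as $\varepsilon,\delta \downarrow 0$, so the hypothesis $\mbox{s-}\overline{\mathcal{H}}_{b}^{\mu}(T,P) > 0$ gives fixed $\varepsilon_0,\delta_0 > 0$ with $\mbox{s-}\overline{\mathcal{H}}(\varepsilon_0,\delta_0,P) > 0$. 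Because $b_n(t)$ is monotone in $t$, the set $\overline{\mathcal{G}}(\varepsilon_0,\delta_0,P)$ is downward closed, so I can extract a $t > 0$, a constant $c > 0$, and a subsequence $n_j \to \infty$ along which
\[
S_P(T,n_j,\varepsilon_0,\delta_0) \geq c\, b_{n_j}(t) = c\, 2^{t a_{n_j}} .
\]

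The engine of the argument is an inequality bounding the refinement entropy below by the covering number. Let $p_1 \geq p_2 \geq \cdots$ be the masses of the atoms of $P_n$ (the measures of the length-$n$ $P$-names), and let $k_\delta$ be the least integer with $\sum_{i\leq k_\delta} p_i \geq 1-\delta$. Centering one $\varepsilon$-Hamming ball at each of the top $k_\delta$ names already covers mass $\geq 1-\delta$, so $S_P(T,n,\varepsilon,\delta) \leq k_\delta$ for every $\varepsilon > 0$. Conversely, minimality of $k_\delta$ gives $\sum_{i\geq k_\delta} p_i > \delta$, while $p_i \leq p_{k_\delta} \leq 1/k_\delta$ for $i \geq k_\delta$, so
\[
\mathcal{H}(P_n) = -\sum_i p_i \log_2 p_i \geq \Big( \sum_{i\geq k_\delta} p_i \Big) \log_2 \tfrac{1}{p_{k_\delta}} \geq \delta \log_2 k_\delta .
\]
Combining the two facts yields the uniform bound $\mathcal{H}(P_n) \geq \delta \log_2 S_P(T,n,\varepsilon,\delta)$, valid for all $n,\varepsilon,\delta$.

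Applying this with $\varepsilon_0,\delta_0$ along the subsequence and inserting the exponential lower bound on $S_P$, I would obtain
\[
\mathcal{H}(P_{n_j}) \geq \delta_0 \log_2 S_P(T,n_j,\varepsilon_0,\delta_0) \geq \delta_0 \big( t\, a_{n_j} + \log_2 c \big) ,
\]
so that $\tfrac{1}{a_{n_j}}\mathcal{H}(P_{n_j}) \to \delta_0 t > 0$ since $a_{n_j} \to \infty$; hence $\limsup_{n\to\infty} \tfrac{1}{a_n}\mathcal{H}\big(\bigvee_{i=0}^{n-1}T^{-i}P\big) \geq \delta_0 t > 0$, and as $P$ was arbitrary this places $T$ in $ES(a)$. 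I expect the main obstacle to be isolating the entropy-versus-covering inequality with the correct orientation: one must dominate $S_P$ by an atom count $k_\delta$ and then bound the Shannon entropy below by $\delta\log_2 k_\delta$, and it is exactly the (harmless) loss of the radius $\varepsilon$ in the first step that makes the estimate go through uniformly in $\varepsilon$. The remaining pieces — the monotonicity unpacking of the slow-entropy definition and the final limit computation — are routine, and the strict inclusion is then immediate from Proposition \ref{example-prop}.
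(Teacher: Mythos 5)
Your argument is correct, and the strict inclusion is handled exactly as the paper intends (via Proposition \ref{example-prop}), but you prove the containment in the direct direction while the paper argues by contrapositive. The paper assumes $\lim_n \frac{1}{a_n}\mathcal{H}(P_n)=0$, splits the atoms of $P_n=\bigvee_{i=0}^{n-1}T^{-i}P$ at the mass threshold $2^{-ta_n}$ into $V_n$ (small) and $V_n'$ (large), deduces from $\mathcal{H}(P_n)\geq ta_n\mu(V_n)+|V_n'|\,ta_n2^{-ta_n}$ that both $\mu(V_n)\to 0$ and $|V_n'|/2^{ta_n}\to 0$, and then covers $1-\delta$ of the space by one ball per large atom to get $S_P(T,n,\delta,\delta)\leq |V_n'|$. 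You instead unpack the hypothesis $T\in\mathcal{G}_b$ into a subsequence with $S_P(T,n_j,\varepsilon_0,\delta_0)\geq c\,2^{ta_{n_j}}$ and feed it into the clean standalone inequality $\mathcal{H}(P_n)\geq \delta\log_2 S_P(T,n,\varepsilon,\delta)$, obtained by sorting atom masses and comparing $S_P$ to the count $k_\delta$ of atoms needed to exhaust mass $1-\delta$. The underlying mechanism is the same (covering numbers are dominated by counts of large atoms, and entropy controls those counts), but your packaging isolates a reusable quantitative bound, at the cost of the bookkeeping needed to extract $\varepsilon_0$, $\delta_0$, $t$ and the subsequence from the nested limits in the slow-entropy definition; the contrapositive route avoids that bookkeeping since it only has to show a limsup vanishes for every $t$ and $\delta$. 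Two cosmetic points: nonemptiness of $\overline{\mathcal{G}}(\varepsilon_0,\delta_0,P)$ already yields the desired $t$ (downward closedness is not needed), and your final display gives a lower bound tending to $\delta_0 t$ rather than convergence of $\frac{1}{a_{n_j}}\mathcal{H}(P_{n_j})$ itself, though the conclusion $\limsup_n\frac{1}{a_n}\mathcal{H}(P_n)\geq \delta_0 t>0$ is exactly what you need.
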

\begin{proof}
We prove the contrapositive.  Suppose $P$ is a finite measurable partition such that 
\[
\lim_{n\to \infty} \frac{1}{a_n} \mathcal{H} \big( \bigvee_{i=0}^{n-1} 
T^{-i} P \big) = 0. 
\]
Let $P_n = \bigvee_{i=0}^{n-1} T^{-i} P$.  Let $V_n = \{ p\in P_n : 0 < \mu(p) < 2^{-ta_n} \}$ 
and $V_n^{\prime} = \{ p\in P_n : \mu(p) \geq 2^{-ta_n} \}$.  
Thus, for $n$ sufficiently large, 
\begin{align}
\mathcal{H}\big( P_n \big) = & 
- \sum_{p\in V_n} \mu(p) \log{\mu(p)} - \sum_{p\in V_n^{\prime}} \mu(p) \log{\mu(p)} \\ 
\geq & - \sum_{p\in V_n} \mu(p) \log{2^{-ta_n}} - \sum_{p\in V_n^{\prime}} 2^{-ta_n} \log{2^{-ta_n}} \\ 
= &\ ta_n \mu(V_n) + | V_n^{\prime} | ta_n 2^{-ta_n} . 
\end{align}
This implies 
\[
\mu(V_n) \leq \frac{1}{ta_n} \mathcal{H} \big( P_n \big) \to 0,\ \mbox{as}\ n\to \infty . 
\]
Hence, for $\delta > 0$ and $n$ sufficiently large, $\mu(V_n) < \delta$, and 
\[
\frac{S_P(T,n,\delta,\delta)}{2^{ta_n}} \leq \frac{|V_n^{\prime}|}{2^{ta_n}} 
\leq \frac{1}{ta_n} \mathcal{H}(P_n) \to 0,\ \mbox{as}\ n\to \infty . 
\]
Since this is true for all $t>0$ and $\delta > 0$, then 
$\mbox{s-} \overline{\mathcal{H}}_{b}^{\mu} (T,P) = 0$ and therefore $T \notin \mathcal{G}_b$. 
\end{proof}
The previous proposition combined with Corollary \ref{gen-cor} 
gives an extension of Blume's Theorem 4.8 from \cite{blume2012relation}.  
Also, counterexamples from Proposition \ref{example-prop} 
demonstrate this is a nontrivial extension, 
since there exist $T \in ES(a)$ such that $T \notin \mathcal{G}_b$ for corresponding 
$b_n(t) = 2^{ta_n}$. 
Using the technique given in \cite{adams2015tower}, 
given any rigidity sequence $\rho_n$ for an ergodic invertible measure preserving transformation, 
it is possible to construct 
an ergodic invertible measure preserving transformation $T \in ES(a)$ which is rigid on $\rho_n$. 
It is an open question what families of rigidity sequences are realizable 
for transformations with infinite or positive lower slow entropy with respect 
to a given subexponential rate.

\section*{Acknowledgements.} 
We wish to thank Adam Kanigowski, Daren Wei and Karl Petersen for their feedback.

\bibliographystyle{alpha}
\bibliography{slow-ent}


\end{document}